\documentclass{amsart}
\usepackage{setspace}
\usepackage{a4}
\usepackage{amsthm}
\usepackage{latexsym}
\usepackage{amsfonts}
\usepackage{graphicx}
\usepackage{textcomp}
\usepackage{cite}
\usepackage{enumerate}
\usepackage{amssymb}
\usepackage{hyperref}
\usepackage{amsmath}
\usepackage{tikz}
\usepackage[mathscr]{euscript}
\usepackage{mathtools}
\newtheorem{theorem}{Theorem}[section]

\setlength{\parindent}{0pt} \setlength{\evensidemargin}{0.3cm}
\setlength{\oddsidemargin}{0.3cm} \setlength{\topmargin}{-1cm}
\textwidth 16cm \textheight 23cm
\onehalfspacing
\title{This is the title}
\raggedbottom

\usepackage{fancyhdr}

\pagestyle{fancy}
\fancyhead[LO]{\textbf{Non-Archimedean Brauer Oval (of Cassini)  Theorem and Applications}}
\fancyhead[RE]{\textbf{K. Mahesh Krishna}}

\begin{document}
\hrule\hrule\hrule\hrule\hrule
\vspace{0.3cm}	
\begin{center}
{\bf\large{{Non-Archimedean Brauer Oval (of Cassini) Theorem and Applications}}}\\
\vspace{0.3cm}
\hrule\hrule\hrule\hrule\hrule
\vspace{0.3cm}
\textbf{K. Mahesh Krishna}\\
School of Mathematics and Natural Sciences\\
Chanakya University Global Campus\\
NH-648, Haraluru Village\\
Devanahalli Taluk, 	Bengaluru  North District\\
Karnataka State, 562 110, India\\
Email: kmaheshak@gmail.com\\

Date: \today
\end{center}

\hrule\hrule
\vspace{0.5cm}
\textbf{Abstract}: Nica and Sprague [\textit{Am. Math. Mon., 2023}]
 derived a non-Archimedean version of the Gershgorin disk theorem.  We derive a non-Archimedean version of  the oval (of Cassini)  theorem by  Brauer [\textit{Duke Math. J., 1947}] which generalizes the Nica-Sprague disk theorem. We provide applications for bounding the zeros of polynomials over non-Archimedean fields. We also show that our result is equivalent to the non-Archimedean version of the Ostrowski nonsingularity theorem derived by Li and Li [\textit{J. Comput. Appl. Math., 2025}]. 

\textbf{Keywords}:  Eigenvalue, Disk, Oval, Non-Archimedean valued field.\\
\textbf{Mathematics Subject Classification (2020)}:  15A18, 15A42, 12J25, 26E30.\\

\hrule

\hrule
\section{Introduction}
Let $A=[a_{j,k}]_{1\leq j\leq n, 1\leq k \leq n} \in \mathbb{M}_n(\mathbb{C})$. For $1\leq j \leq n$, define 
\begin{align*}
	r_j(A)\coloneqq\sum_{k=1, k \neq j}^{n}|a_{j,k}|.
\end{align*}
For $1\leq k \leq n$, define 
\begin{align*}
	c_k(A)\coloneqq\sum_{j=1, j \neq k}^{n}|a_{j,k}|.
\end{align*}
Let $\sigma(A)$ be the set of all eigenvalues of $A$. In 1931, Gershgorin proved the following breakthrough result known as the Gershgorin circle/disk theorem which uses single row/column  for determining the radius of the disk.
\begin{theorem} \cite{GERSHGORIN, HORNJOHNSON, VARGA, BRUALDIMELLENDORF} \label{GEIT} (\textbf{Gershgorin Eigenvalue Inclusion Theorem} or \textbf{Gershgorin Disk Theorem})
	For every $A=[a_{j,k}]_{1\leq j\leq n, 1\leq k \leq n}\in  \mathbb{M}_n(\mathbb{C})$,
	\begin{align*}
		\sigma(A)\subseteq \bigcup_{j=1}^n\{z \in \mathbb{C}: |z-a_{j, j}|\leq r_j(A)\}
	\end{align*}
and 
\begin{align*}
		\sigma(A)\subseteq \bigcup_{k=1}^n\{z \in \mathbb{C}: |z-a_{k, k}|\leq c_k(A)\}. 
\end{align*}
\end{theorem}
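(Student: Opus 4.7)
The plan is to establish the row inclusion via the classical maximum-modulus coordinate argument, and then derive the column inclusion by passing to the transpose. Let $\lambda \in \sigma(A)$ and fix a corresponding nonzero eigenvector $x = (x_1, \ldots, x_n)^T$ satisfying $Ax = \lambda x$. I would then choose an index $j$ with $|x_j| = \max_{1 \leq k \leq n}|x_k|$, which exists and is strictly positive because $x \neq 0$.

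Reading off the $j$-th component of the identity $Ax = \lambda x$ gives $\sum_{k=1}^n a_{j,k} x_k = \lambda x_j$, which I would rearrange as $(\lambda - a_{j,j}) x_j = \sum_{k \neq j} a_{j,k} x_k$. Applying the ordinary triangle inequality yields
\[
|\lambda - a_{j,j}|\, |x_j| \leq \sum_{k \neq j} |a_{j,k}|\, |x_k| \leq |x_j| \sum_{k \neq j} |a_{j,k}| = |x_j|\, r_j(A),
\]
where the second inequality uses the maximality of $|x_j|$. Dividing through by $|x_j| > 0$ gives $|\lambda - a_{j,j}| \leq r_j(A)$, placing $\lambda$ in the $j$-th Gershgorin disk and hence in the union on the right-hand side.

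For the column inclusion, I would invoke the fact that $A$ and $A^T$ share the same characteristic polynomial, so that $\sigma(A) = \sigma(A^T)$. Applying the row inclusion to $A^T$, and noting that the diagonal entries are preserved while $r_k(A^T) = c_k(A)$, immediately delivers the second statement. There is no real obstacle in this classical setting; the only delicate step is the selection of a coordinate attaining the maximum modulus, which is automatic in finite dimensions. This is also the step that will need to be recast under an ultrametric absolute value in the main theorems of the paper, where the strong triangle inequality replaces the ordinary one and tightens the resulting bound to a maximum rather than a sum.
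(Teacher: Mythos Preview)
Your argument is correct and is precisely the standard proof of Gershgorin's theorem as found in the references the paper cites (e.g., Horn--Johnson, Varga). Note, however, that the paper itself does not supply a proof of this statement: Theorem~\ref{GEIT} is quoted as classical background with citations only, so there is no in-paper proof to compare against. That said, your maximum-modulus coordinate argument together with the passage to the transpose is exactly the template the paper adopts for its own contributions --- the proof of Theorem~\ref{NAB} (the non-Archimedean Brauer oval theorem) follows the same skeleton, selecting the largest and second-largest coordinates of an eigenvector and replacing the ordinary triangle inequality by the ultrametric one, just as you anticipate in your final remark.
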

A remarkable application of Theorem \ref{GEIT} is the following result of Frobenius (which advances result of Browne). 
\begin{theorem} \label{FROBENIUST} \cite{BRAUER0, MARCUSMINC, BROWNE}
Let $A=[a_{j,k}]_{1\leq j\leq n, 1\leq k \leq n} \in \mathbb{M}_n(\mathbb{C})$. For every 	$\lambda \in \sigma(A)$, 
\begin{align*}
	|\lambda| \leq \min \left\{\max_{1\leq j \leq n}\sum_{k=1}^{n}|a_{j,k}|, \max_{1\leq k \leq n}\sum_{j=1}^{n}|a_{j,k}|\right\}\leq \frac{1}{2} \left(\max_{1\leq j \leq n}\sum_{k=1}^{n}|a_{j,k}|+ \max_{1\leq k \leq n}\sum_{j=1}^{n}|a_{j,k}|\right).
\end{align*}
In particular, 
\begin{align*}
	|\text{det}(A)| &\leq \min \left\{\left(\max_{1\leq j \leq n}\sum_{k=1}^{n}|a_{j,k}|\right)^n, \left(\max_{1\leq k \leq n}\sum_{j=1}^{n}|a_{j,k}|\right)^n\right\}\\
	&\leq \frac{1}{2} \left(\left(\max_{1\leq j \leq n}\sum_{k=1}^{n}|a_{j,k}|\right)^n+ \left(\max_{1\leq k \leq n}\sum_{j=1}^{n}|a_{j,k}|\right)^n\right).
\end{align*}
\end{theorem}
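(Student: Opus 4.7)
The plan is to derive both bounds in Theorem \ref{FROBENIUST} directly from the Gershgorin Disk Theorem (Theorem \ref{GEIT}), and then deduce the determinant inequalities from the standard identity $\det(A)=\prod_{i=1}^{n}\lambda_i$ where $\lambda_1,\dots,\lambda_n$ are the eigenvalues of $A$ counted with algebraic multiplicity.

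First I would take any $\lambda\in\sigma(A)$ and apply the row version of Theorem \ref{GEIT}: there exists an index $j$ with $|\lambda-a_{j,j}|\leq r_j(A)=\sum_{k\neq j}|a_{j,k}|$. The triangle inequality then gives
\begin{align*}
|\lambda|\leq|a_{j,j}|+\sum_{k\neq j}|a_{j,k}|=\sum_{k=1}^{n}|a_{j,k}|\leq \max_{1\leq j\leq n}\sum_{k=1}^{n}|a_{j,k}|.
\end{align*}
Repeating the same argument with the column version of Theorem \ref{GEIT} yields $|\lambda|\leq \max_{1\leq k\leq n}\sum_{j=1}^{n}|a_{j,k}|$, and taking the minimum of the two bounds establishes the first inequality in the statement. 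The second inequality is the elementary observation $\min\{x,y\}\leq\tfrac{1}{2}(x+y)$ for real numbers $x,y\geq 0$.

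For the determinant part, I would use $|\det(A)|=\prod_{i=1}^{n}|\lambda_i|$. Since each $|\lambda_i|$ is bounded by the quantity just established, multiplying $n$ such bounds gives
\begin{align*}
|\det(A)|\leq \min\left\{\left(\max_{1\leq j\leq n}\sum_{k=1}^{n}|a_{j,k}|\right)^{n},\left(\max_{1\leq k\leq n}\sum_{j=1}^{n}|a_{j,k}|\right)^{n}\right\},
\end{align*}
and a second application of $\min\{x,y\}\leq\tfrac{1}{2}(x+y)$ (with $x,y$ now being the two $n$-th powers) yields the final inequality.

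There is no real obstacle here: the whole argument is a direct corollary of Theorem \ref{GEIT} combined with the triangle inequality and the trivial bound $\min\{x,y\}\leq\tfrac{1}{2}(x+y)$. The only point worth being careful about is that the product formula for $\det(A)$ must be applied with algebraic multiplicities, so that the exponent $n$ in the determinant bound matches the size of the matrix regardless of whether $A$ is diagonalizable.
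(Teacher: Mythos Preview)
Your argument is correct and is exactly the approach the paper alludes to: the paper presents Theorem \ref{FROBENIUST} as a known result, citing \cite{BRAUER0, MARCUSMINC, BROWNE} and describing it as ``a remarkable application of Theorem \ref{GEIT}'' without writing out any details. Your derivation via Gershgorin plus the triangle inequality, followed by $\min\{x,y\}\le\tfrac12(x+y)$ and the identity $|\det(A)|=\prod_i|\lambda_i|$ with algebraic multiplicities, is precisely the standard proof these references contain.
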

Another  remarkable application of Theorem \ref{GEIT} is on the bounds for the zeros of polynomials. Let $p(z)\coloneqq c_0+c_1z+\cdots +c_{n-1}z^{n-1}+z^n \in \mathbb{C}[z]$. A direct observation reveals that the zeros of $p$ are the eigenvalues of the Frobenius companion matrix 
\begin{align*}
	C_p\coloneqq 
	\begin{pmatrix}
		0&1&0&\cdots &0 &0&0\\
		0&0&1&\cdots &0 &0&0\\
		0&0&0&\cdots &0 &0&0\\
		\vdots &\vdots &\vdots & & \vdots &\vdots &\vdots\\
		0&0&0&\cdots &0 &1&0\\
		0&0&0&\cdots &0 &0&1\\
	-c_0&-c_1&-c_2&\cdots &-c_{n-3} &-c_{n-2}&-c_{n-1}\\
	\end{pmatrix} \in \mathbb{M}_n(\mathbb{C})
\end{align*}
and the eigenvalues of $C_p$ are the same as the zeros of $p$ \cite{BRAND, BRAND2}. In 1965, Bell derived following bounds for the zeros of $p$ using Theorem \ref{GEIT}.
\begin{theorem} \cite{BELL}
Let $p(z)\coloneqq c_0+c_1z+\cdots +c_{n-1}z^{n-1}+z^n \in \mathbb{C}[z]$.  If $\lambda$ is a  zero of $p$, then 
\begin{align*}
	|\lambda|\leq 1
\end{align*}
or 
\begin{align*}
	|\lambda+c_{n-1}|\leq |c_0|+\cdots +|c_{n-2}|.
\end{align*}
In particular, 
\begin{align}\label{LB}
\text{(Lagrange bound)} \quad \quad \quad 	|\lambda|\leq \max\{1, |c_0|+\cdots+|c_{n-1}|\}.
\end{align}
\end{theorem}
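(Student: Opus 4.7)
The plan is to apply the Gershgorin Disk Theorem (Theorem \ref{GEIT}) to the Frobenius companion matrix $C_p$, using the fact noted in the excerpt that the zeros of $p$ coincide with the eigenvalues of $C_p$. The dichotomy in the statement then drops out simply from inspecting the row sums of $C_p$.

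First I would compute the Gershgorin disks row by row. For each row $j$ with $1 \leq j \leq n-1$, the diagonal entry of $C_p$ is $0$ and the sole nonzero off-diagonal entry is the $1$ on the superdiagonal, so $r_j(C_p) = 1$ and the associated Gershgorin disk is $\{z \in \mathbb{C} : |z| \leq 1\}$. These $n-1$ disks coincide, so they contribute a single unit disk to the union. For the last row the diagonal entry is $-c_{n-1}$ while the off-diagonal entries are $-c_0, -c_1, \ldots, -c_{n-2}$, giving $r_n(C_p) = |c_0| + |c_1| + \cdots + |c_{n-2}|$ and the disk
\begin{align*}
\{z \in \mathbb{C} : |z + c_{n-1}| \leq |c_0| + |c_1| + \cdots + |c_{n-2}|\}.
\end{align*}
Applying Theorem \ref{GEIT} (row version) to $C_p$ therefore forces any zero $\lambda$ of $p$ to lie in one of these two disks, which is exactly the claimed alternative.

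For the Lagrange bound \eqref{LB} I would argue by cases on the two alternatives: if $|\lambda| \leq 1$ the inequality is immediate, while if $|\lambda + c_{n-1}| \leq |c_0| + \cdots + |c_{n-2}|$, a single application of the triangle inequality gives
\begin{align*}
|\lambda| \leq |c_{n-1}| + |\lambda + c_{n-1}| \leq |c_0| + |c_1| + \cdots + |c_{n-1}|.
\end{align*}
Combining both cases yields $|\lambda| \leq \max\{1, |c_0| + \cdots + |c_{n-1}|\}$. There is no real obstacle here; the only point requiring a moment's care is the observation that the first $n-1$ Gershgorin rows all produce the same unit disk centered at the origin, so the union of $n$ Gershgorin disks collapses to only two, producing the neat dichotomy in the statement rather than a longer list of cases.
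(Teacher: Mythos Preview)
Your proof is correct and is precisely the approach the paper attributes to Bell: apply Theorem~\ref{GEIT} (the row version of Gershgorin) to the companion matrix $C_p$, where the first $n-1$ rows all give the unit disk and the last row gives $|z+c_{n-1}|\le |c_0|+\cdots+|c_{n-2}|$, then recover the Lagrange bound via the triangle inequality. The paper does not write out the argument explicitly, but your version matches the indicated method.
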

Note that Inequality (\ref{LB}) is a generalization of famous Montel bound \cite{HORNJOHNSON} which says that 
\begin{align*}
	|\lambda|\leq 1+|c_0|+\cdots+|c_{n-1}|.
\end{align*}
It is interesting to note that one can give a proof of Inequality (\ref{LB}) without using companion matrix \cite{HIRSTMACEY}.
\begin{theorem} \cite{BELL}
	Let $p(z)\coloneqq c_0+c_1z+\cdots +c_{n-1}z^{n-1}+z^n \in \mathbb{C}[z]$.  If $\lambda$ is a  zero of $p$, then 
	\begin{align*}
		|\lambda|\leq |c_0|
	\end{align*}
	or 
		\begin{align*}
		|\lambda|\leq 1+|c_{j}|, \quad \text{ for some } 1\leq j \leq n-2
	\end{align*}
	or 
	\begin{align*}
		|\lambda+c_{n-1}|\leq 1.
	\end{align*}
	In particular, 
	\begin{align}\label{BB}
	\text{(Bell bound)} \quad \quad \quad	|\lambda|\leq \max\{|c_0|, 1+|c_1|, \dots, 1+|c_{n-1}|\}.
	\end{align}
\end{theorem}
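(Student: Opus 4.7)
The plan is to apply the column version of Theorem \ref{GEIT} to the Frobenius companion matrix $C_p$ introduced in the preceding paragraph. Since the zeros of $p$ are precisely the eigenvalues of $C_p$, every such zero $\lambda$ must lie in at least one Gershgorin column disk $\{z\in\mathbb{C}:|z-a_{k,k}|\le c_k(C_p)\}$. I would show that the three alternatives in the statement correspond exactly to the three structurally distinct types of columns of $C_p$: the first column, the middle columns (indices $2,\dots,n-1$), and the last column.

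The key computation is to read off the column off-diagonal sums. For $k=1$, the only nonzero entry in column $1$ is $a_{n,1}=-c_0$, the diagonal is $a_{1,1}=0$, and the resulting disk is $|z|\le |c_0|$, yielding the first alternative. For each $k$ with $2\le k\le n-1$, column $k$ has exactly two nonzero entries, namely $a_{k-1,k}=1$ and $a_{n,k}=-c_{k-1}$, while the diagonal $a_{k,k}=0$; the disk is therefore $|z|\le 1+|c_{k-1}|$, and letting $j=k-1$ range over $1,\dots,n-2$ produces the second alternative. For $k=n$, the diagonal entry is $a_{n,n}=-c_{n-1}$ and the only off-diagonal nonzero entry is $a_{n-1,n}=1$, so the corresponding disk is $|z+c_{n-1}|\le 1$, giving the third alternative.

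To conclude the Bell bound (\ref{BB}), I would apply the triangle inequality $|\lambda|\le |\lambda+c_{n-1}|+|c_{n-1}|\le 1+|c_{n-1}|$ to the third case, upper-bound each of the three alternatives by a term in the set $\{|c_0|,\,1+|c_1|,\dots,1+|c_{n-1}|\}$, and take the maximum. The only real subtlety, and the place where one must be careful, is the asymmetry of the last column: unlike all other columns of $C_p$, its diagonal entry is nonzero, so the resulting disk is centered at $-c_{n-1}$ rather than at the origin. Keeping this shift straight (rather than carelessly merging the last disk into the $|z|\le 1+|c_j|$ family) is the only nontrivial bookkeeping point; the remainder is a direct application of Theorem \ref{GEIT}.
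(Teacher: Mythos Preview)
Your proposal is correct and is exactly the argument the paper intends: the paper introduces the companion matrix $C_p$ and states that Bell obtained these bounds by applying Theorem \ref{GEIT} to $C_p$, and your column-by-column computation of the Gershgorin disks (first column, middle columns, last column) reproduces the three alternatives verbatim, with the Bell bound following from the triangle inequality in the third case. There is nothing to add or correct.
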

Note that Inequality (\ref{BB}) is a generalization of famous Cauchy bound \cite{MARDEN} which says that 
\begin{align*}
		|\lambda|\leq 1+\max\{|c_0|, \dots, |c_{n-1}|\}.
\end{align*}
Let $p(z)\coloneqq c_0+c_1z+\cdots +c_{n-1}z^{n-1}+z^n \in \mathbb{C}[z]$ with $c_0\neq 0$. Define 
\begin{align*}
	q(z)\coloneqq \frac{1}{c_0}z^np\left(\frac{1}{z}\right)=\frac{1}{c_0}+\frac{c_{n-1}}{c_0}z+\cdots +\frac{c_{1}}{c_0}z^{n-1}+z^n \in  \mathbb{C}[z].
\end{align*}
We note that $\lambda \in \mathbb{C}\setminus \{0\}$ satisfies $p(\lambda)=0$ if and only if $q(1/\lambda)=0$. Frobenius companion matrix of $q$ is 
\begin{align*}
	C_q\coloneqq 
	\begin{pmatrix}
		0&1&0&\cdots &0 &0&0\\
		0&0&1&\cdots &0 &0&0\\
		0&0&0&\cdots &0 &0&0\\
		\vdots &\vdots &\vdots & & \vdots &\vdots &\vdots\\
		0&0&0&\cdots &0 &1&0\\
		0&0&0&\cdots &0 &0&1\\
		\frac{-1}{c_0}&	\frac{-c_{n-1}}{c_0}&\frac{-c_{n-2}}{c_0}&\cdots &\frac{-c_3}{c_0}& \frac{-c_{2}}{c_0}&\frac{-c_{1}}{c_0}\\
	\end{pmatrix} \in \mathbb{M}_n(\mathbb{C}).
\end{align*}
Now by applying earlier two results to the polynomial $q$ and rearranging, we get following results. 
\begin{theorem} \cite{BELL, HORNJOHNSON}
	Let $p(z)\coloneqq c_0+c_1z+\cdots +c_{n-1}z^{n-1}+z^n \in \mathbb{C}[z]$ with $c_0\neq 0$.  If $\lambda$ is a  zero of $p$, then 
	\begin{align*}
		\frac{1}{|\lambda|}\leq 1
	\end{align*}
	or 
	\begin{align*}
		\left|\frac{1}{\lambda}+\frac{c_1}{c_0}\right|\leq\frac{1}{|c_0|}+\frac{|c_2|}{|c_0|}+\cdots +\frac{|c_{n-1}|}{|c_0|}.
	\end{align*}
	In particular, 
	\begin{align*}
			\frac{1}{|\lambda|} \leq \max\left\{1,  \frac{1}{|c_0|}+\frac{|c_1|}{|c_0|}+\frac{|c_2|}{|c_0|}+\cdots +\frac{|c_{n-1}|}{|c_0|}\right\}\leq 1+\frac{1}{|c_0|}+\frac{|c_1|}{|c_0|}+\cdots +\frac{|c_{n-1}|}{|c_0|}
	\end{align*}
and 
	\begin{align*}
	\text{(Lagrange lower bound)} \quad \quad \quad 	|\lambda|&\geq \frac{|c_0|}{\max\{|c_0|, 1+|c_1|+\cdots+|c_{n-1}|\}}\\
	\text{(Montel lower bound)} \quad \quad \quad	&\geq \frac{|c_0|}{1+|c_0|+|c_1|+\cdots+|c_{n-1}|}.
\end{align*}
\end{theorem}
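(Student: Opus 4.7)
The strategy is the one the preceding paragraph already telegraphs: apply the two Bell-type upper-bound theorems to $q$ in place of $p$ and invert. Since $c_0 \neq 0$, no zero $\lambda$ of $p$ equals $0$, and from the identity $q(z)=\tfrac{1}{c_0}z^n p(1/z)$ it follows that $\mu\coloneqq 1/\lambda$ is a zero of the monic polynomial $q$. I would first write the coefficients of $q$ explicitly as $d_0=1/c_0$, $d_k=c_{n-k}/c_0$ for $1\le k\le n-1$, and $d_n=1$; in particular $d_{n-1}=c_1/c_0$ and
\[
 |d_0|+|d_1|+\cdots+|d_{n-2}|=\frac{1}{|c_0|}\bigl(1+|c_{n-1}|+|c_{n-2}|+\cdots+|c_2|\bigr).
\]

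Next I would feed $\mu$ into the first Bell theorem applied to $q$, which gives the dichotomy $|\mu|\le 1$ or $|\mu+d_{n-1}|\le |d_0|+\cdots+|d_{n-2}|$. Substituting $\mu=1/\lambda$ and the computed $d_k$'s yields the two displayed alternatives in the statement. For the ``in particular'' chain, I would invoke the Lagrange upper bound $|\mu|\le \max\{1,\sum_{k=0}^{n-1}|d_k|\}$ specialised to $q$, obtaining $1/|\lambda|\le \max\{1,\tfrac{1}{|c_0|}(1+|c_1|+\cdots+|c_{n-1}|)\}$, and then the routine majorisation $\max\{a,b\}\le a+b$ gives the second form $\le 1+\tfrac{1}{|c_0|}(1+|c_1|+\cdots+|c_{n-1}|)$. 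Taking reciprocals and clearing the $|c_0|$ out of the denominators produces the stated Lagrange lower bound; the Montel lower bound is the same manipulation applied to the Montel upper bound $|\mu|\le 1+\sum_{k=0}^{n-1}|d_k|$ for $q$.

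The only potential pitfall is bookkeeping: the reversal $c_k \mapsto c_{n-k}$ permutes indices, so one must be careful that the coefficient of $z^{n-1}$ in $q$ is $c_1/c_0$ (not $c_{n-1}/c_0$), that the sum $|d_0|+\cdots+|d_{n-2}|$ omits the $c_1/c_0$ term, and that the full sum $\sum_{k=0}^{n-1}|d_k|$ equals $(1+|c_1|+\cdots+|c_{n-1}|)/|c_0|$. No genuine mathematical obstacle arises beyond this; the proof is purely a change of variable $\lambda\leftrightarrow 1/\lambda$ applied to results already proved.
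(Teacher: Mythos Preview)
Your proposal is correct and matches the paper's own argument essentially verbatim: the paper introduces $q(z)=\tfrac{1}{c_0}z^n p(1/z)$, notes that $p(\lambda)=0 \iff q(1/\lambda)=0$, and states that ``by applying earlier two results to the polynomial $q$ and rearranging'' one obtains this theorem. Your coefficient bookkeeping ($d_0=1/c_0$, $d_k=c_{n-k}/c_0$, $d_{n-1}=c_1/c_0$) is exactly the ``rearranging'' the paper leaves implicit.
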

\begin{theorem} \cite{BELL, HORNJOHNSON}
	Let $p(z)\coloneqq c_0+c_1z+\cdots +c_{n-1}z^{n-1}+z^n \in \mathbb{C}[z]$ with $c_0\neq 0$.  If $\lambda$ is a  zero of $p$, then 	
		\begin{align*}
	\frac{1}{|\lambda|}\leq \frac{1}{|c_0|}
	\end{align*}
	or 
	\begin{align*}
		\frac{1}{|\lambda|}\leq 1+\frac{|c_j|}{|c_0|}, \quad \text{ for some } 2\leq j \leq n-1
	\end{align*}
	or 
	\begin{align*}
		\left|\frac{1}{\lambda}+\frac{c_1}{c_0}\right|\leq 1.
	\end{align*}
	In particular, 
	\begin{align*}
		\frac{1}{|\lambda|}\leq \max\left\{\frac{1}{|c_0|}, 1+\frac{|c_1|}{|c_0|}, \dots, 1+\frac{|c_{n-1}|}{|c_0|}\right\}\leq 1+\max\left\{\frac{1}{|c_0|}, \frac{|c_1|}{|c_0|}, \dots, \frac{|c_{n-1}|}{|c_0|}\right\}
	\end{align*}
	and 
	\begin{align*}
		\text{(Bell lower bound)} \quad \quad \quad	|\lambda|&\geq \frac{|c_0|}{\max\{1, |c_0|+|c_1|, \dots, |c_0|+|c_{n-1}|\}}\\
		\text{(Cauchy lower bound)} \quad \quad \quad 	&\geq \frac{|c_0|}{|c_0|+\max\{1, |a_1|, \dots, |a_{n-1}|\}}.
	\end{align*}
\end{theorem}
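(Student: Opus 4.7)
The strategy is to reduce this to the Bell bound (inequality (\ref{BB})) applied to the reciprocal polynomial $q$, exactly as the narrative preceding the theorem already anticipates. The starting observation is that since $c_0 \neq 0$, the value $0$ is not a zero of $p$, so every zero $\lambda$ of $p$ is nonzero, and the equivalence $p(\lambda)=0 \Leftrightarrow q(1/\lambda)=0$ lets us transfer any bound for the zeros of the monic polynomial $q$ into a bound on $1/|\lambda|$.

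Concretely, I would first expand
\begin{align*}
q(z) = z^n + \tfrac{c_1}{c_0}z^{n-1} + \tfrac{c_2}{c_0}z^{n-2} + \cdots + \tfrac{c_{n-1}}{c_0}z + \tfrac{1}{c_0},
\end{align*}
so that in the notation $q(z) = a_0 + a_1 z + \cdots + a_{n-1}z^{n-1} + z^n$ we have $a_0 = 1/c_0$, $a_{n-1} = c_1/c_0$, and $a_j = c_{n-j}/c_0$ for $1 \leq j \leq n-1$. Applying the Bell bound to $q$ with zero $\mu = 1/\lambda$ yields one of the three alternatives $|\mu| \leq |a_0|$, or $|\mu| \leq 1 + |a_j|$ for some $1 \leq j \leq n-2$, or $|\mu + a_{n-1}| \leq 1$. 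Substituting the explicit values of the $a_j$ (and reindexing $j \mapsto n-j$ in the middle case, which converts the range $1 \leq j \leq n-2$ into $2 \leq j \leq n-1$) reproduces verbatim the three disjunctive inequalities in the statement.

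For the ``in particular'' part, I would take the maximum of the three upper bounds on $1/|\lambda|$ to obtain the first displayed inequality, and then use the trivial estimate $\max\{\alpha,1+\beta,\dots,1+\gamma\} \leq 1+\max\{\alpha,\beta,\dots,\gamma\}$ for the second. Inverting these inequalities gives the Bell lower bound for $|\lambda|$; relaxing the denominator via the same $\max \leq 1 + \max$ comparison (now applied to $\max\{1,|c_0|+|c_1|,\dots,|c_0|+|c_{n-1}|\} \leq |c_0|+\max\{1,|c_1|,\dots,|c_{n-1}|\}$) gives the Cauchy lower bound.

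There is no real obstacle here: the entire argument is coefficient bookkeeping together with two elementary $\max$-manipulations. The only point requiring a little care is the reindexing of the middle case, where the roles of ``low-degree'' and ``high-degree'' coefficients are swapped between $p$ and $q$, so one must be careful to match $a_j$ of $q$ with $c_{n-j}$ of $p$ before concluding that the constraint $1 \leq j \leq n-2$ on $q$ corresponds exactly to $2 \leq j \leq n-1$ on $p$.
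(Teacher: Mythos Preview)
Your proposal is correct and matches the paper's approach exactly: the paper states just before this theorem that it is obtained ``by applying earlier two results to the polynomial $q$ and rearranging,'' which is precisely your application of the Bell bound (\ref{BB}) to $q$ followed by the coefficient reindexing $a_j=c_{n-j}/c_0$. The bookkeeping you describe for the three disjunctive cases and for the ``in particular'' bounds is all that is needed.
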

A result much older than Gershgorin is the following. 
\begin{theorem} \cite{VARGA, TAUSSKY, HORNJOHNSON, SCHNEIDER} \label{SDDT} (\textbf{Strict Diagonal Dominance Theorem} or \textbf{Levy-Desplanques Theorem})
	If  $A=[a_{j,k}]_{1\leq j\leq n, 1\leq k \leq n}\in  \mathbb{M}_n(\mathbb{C})$ satisfies
	\begin{align*}
		|a_{j,j}|>r_j(A), \quad \forall 1\leq j \leq n
	\end{align*}
	or 
	\begin{align*}
		|a_{k,k}|>c_k(A), \quad \forall 1\leq k \leq n,
	\end{align*}
	then $A$ is invertible. 
\end{theorem}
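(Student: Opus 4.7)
The plan is to derive Theorem \ref{SDDT} as an immediate corollary of the Gershgorin Disk Theorem (Theorem \ref{GEIT}) by contrapositive. My strategy is to assume that $A$ is singular and then produce a row (respectively, a column) whose diagonal entry is dominated in absolute value by its off-diagonal absolute-value sum, directly contradicting the hypothesis.

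First I would translate singularity into spectral language: if $A$ is not invertible, then $\det(A)=0$, which is the constant term of the characteristic polynomial, so $0\in\sigma(A)$. Next, under the row strict diagonal dominance hypothesis $|a_{j,j}|>r_j(A)$ for all $j$, I apply the first inclusion of Theorem \ref{GEIT} to the eigenvalue $\lambda=0$. This yields some index $j$ with
\[
|a_{j,j}|=|0-a_{j,j}|\leq r_j(A),
\]
contradicting $|a_{j,j}|>r_j(A)$. For the column hypothesis the argument is symmetric: I invoke instead the second inclusion of Theorem \ref{GEIT} applied to $\lambda=0$ to obtain some $k$ with $|a_{k,k}|\leq c_k(A)$, again contradicting the standing assumption.

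Because Theorem \ref{GEIT} has already been established as a black box, there is no serious obstacle here; the two cases together form a short contrapositive argument. The only formal step is passing from $\det(A)=0$ to the existence of a zero eigenvalue, which is standard linear algebra. It is perhaps worth noting that the logical content of Theorems \ref{SDDT} and \ref{GEIT} is essentially equivalent, since one can recover Gershgorin by applying the strict diagonal dominance theorem to the shifted matrix $A-\lambda I$; but for the direction required here the contrapositive via eigenvalues is the cleanest route.
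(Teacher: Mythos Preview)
Your argument is correct. Note, however, that the paper does not supply its own proof of Theorem~\ref{SDDT}; it is quoted as a classical result with references. That said, your contrapositive derivation from Theorem~\ref{GEIT} is exactly the standard route, and it coincides with the implication $(\mathrm{i})\Rightarrow(\mathrm{ii})$ that the paper does write out explicitly in the non-Archimedean analogue (Theorem~\ref{E1}): assume $0\in\sigma(B)$, apply the inclusion theorem at $\lambda=0$, and contradict the dominance hypothesis. So your proposal is both valid and in line with the paper's methodology elsewhere.
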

Following results say that Theorems \ref{GEIT} and \ref{SDDT} are equivalent.
\begin{theorem} \cite{VARGA}
	Let $n \in \mathbb{N}$.  Following two statements are equivalent. 
	\begin{enumerate}[\upshape(i)]
		\item Let $A=[a_{j,k}]_{1\leq j\leq n, 1\leq k \leq n}\in  \mathbb{M}_n(\mathbb{C})$. Then 
		\begin{align*}
			\sigma(A)\subseteq \bigcup_{j=1}^n\{z \in \mathbb{C}: |z-a_{j, j}|\leq r_j(A)\}.
		\end{align*}
		\item 	If  $B=[b_{j,k}]_{1\leq j\leq n, 1\leq k \leq n}\in  \mathbb{M}_n(\mathbb{C})$ satisfies
		\begin{align*}
			|b_{j,j}|>r_j(B), \quad \forall 1\leq j \leq n,
		\end{align*}
		then $B$ is invertible. 
	\end{enumerate}	
\end{theorem}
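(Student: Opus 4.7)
The plan is to prove each implication independently, exploiting the elementary fact that off-diagonal row sums are invariant under diagonal shifts: for any scalar $\lambda$, the matrix $A - \lambda I$ has the same off-diagonal entries as $A$, so $r_j(A - \lambda I) = r_j(A)$ for every $j$. This single observation, together with the characterization of eigenvalues as precisely those $\lambda$ for which $A - \lambda I$ is singular, is what powers both directions.

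For the direction (i)$\Rightarrow$(ii), I would assume (i) and show that a strictly row diagonally dominant matrix $B$ cannot have $0$ as an eigenvalue. Indeed, if $B$ were singular, then $0 \in \sigma(B)$, and applying (i) to $B$ would give some index $j$ with $|b_{j,j}| = |0 - b_{j,j}| \leq r_j(B)$, directly contradicting the hypothesis $|b_{j,j}| > r_j(B)$. Therefore $B$ must be invertible.

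For the direction (ii)$\Rightarrow$(i), I would fix an arbitrary $\lambda \in \sigma(A)$ and consider $B \coloneqq A - \lambda I$, which is singular by definition of eigenvalue. By the invariance observation, $r_j(B) = r_j(A)$ for every $j$. The contrapositive of (ii) then guarantees some index $j$ for which $|b_{j,j}| \leq r_j(B)$, i.e., $|a_{j,j} - \lambda| \leq r_j(A)$, placing $\lambda$ in the $j$th Gershgorin disk of $A$. Since $\lambda$ was arbitrary, the full inclusion claimed in (i) follows.

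No serious obstacle arises in this argument; it is essentially formal, being built from contradiction in one direction and contraposition in the other. The only bookkeeping point worth flagging is that the definition $r_j(A) = \sum_{k \neq j} |a_{j,k}|$ explicitly excludes the diagonal entry, so shifting the diagonal by any scalar leaves every $r_j$ unchanged; once this is recorded, both implications are immediate.
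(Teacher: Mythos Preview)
Your proposal is correct. The paper does not actually give its own proof of this particular cited result (it is quoted from \cite{VARGA} as background), but it does prove the non-Archimedean Brauer/Ostrowski analogue (Theorem~\ref{E1}), and the argument there follows exactly the pattern you outline: for (i)$\Rightarrow$(ii) one assumes $B$ is singular, applies (i) at $z=0$, and contradicts the dominance hypothesis; for (ii)$\Rightarrow$(i) one supposes $\lambda$ lies in none of the regions, forms $B=\lambda I-A$, notes the off-diagonal row quantities are unchanged so $B$ satisfies the dominance hypothesis, and invokes (ii) to contradict $\lambda\in\sigma(A)$. Up to the cosmetic choice of $A-\lambda I$ versus $\lambda I-A$, your argument is the same.
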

\begin{theorem} \cite{VARGA}
	Let $n \in \mathbb{N}$.  Following two statements are equivalent. 
	\begin{enumerate}[\upshape(i)]
		\item Let $A=[a_{j,k}]_{1\leq j\leq n, 1\leq k \leq n}\in  \mathbb{M}_n(\mathbb{C})$. Then 
		\begin{align*}
			\sigma(A)\subseteq \bigcup_{k=1}^n\{z \in \mathbb{C}: |z-a_{k,k}|\leq c_k(A)\}.
		\end{align*}
		\item 	If  $B=[b_{j,k}]_{1\leq j\leq n, 1\leq k \leq n}\in  \mathbb{M}_n(\mathbb{C})$ satisfies
		\begin{align*}
			|b_{k,k}|>c_k(B), \quad \forall 1\leq k \leq n,
		\end{align*}
		then $B$ is invertible. 
	\end{enumerate}	
\end{theorem}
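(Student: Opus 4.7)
The plan is to follow the standard template already used implicitly for the row version: show $(\mathrm{i}) \Rightarrow (\mathrm{ii})$ by contraposition on singularity (a singular matrix has $0$ as an eigenvalue, and then the disk inclusion forces some diagonal entry to be dominated by its column off-diagonal sum), and show $(\mathrm{ii}) \Rightarrow (\mathrm{i})$ by forming the matrix $\lambda I - A$ and noting that its column sums are preserved while an eigenvalue $\lambda$ outside every column disk would force strict column dominance.

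For $(\mathrm{i}) \Rightarrow (\mathrm{ii})$: assume $B$ satisfies $|b_{k,k}| > c_k(B)$ for all $k$, and suppose toward contradiction that $B$ is singular. Then $0 \in \sigma(B)$, so by (i) there is some index $k$ with $|0 - b_{k,k}| \leq c_k(B)$, i.e., $|b_{k,k}| \leq c_k(B)$, contradicting the hypothesis. Hence $B$ is invertible. This direction is straightforward.

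For $(\mathrm{ii}) \Rightarrow (\mathrm{i})$: let $\lambda \in \sigma(A)$ and set $B \coloneqq \lambda I - A$. Then $b_{k,k} = \lambda - a_{k,k}$, and for $j \neq k$ we have $b_{j,k} = -a_{j,k}$, so
\begin{align*}
c_k(B) = \sum_{j \neq k} |b_{j,k}| = \sum_{j \neq k} |a_{j,k}| = c_k(A).
\end{align*}
If $\lambda$ were not in any of the column Gershgorin disks, then $|\lambda - a_{k,k}| > c_k(A)$ for every $k$, i.e., $|b_{k,k}| > c_k(B)$ for every $k$. By (ii), $B = \lambda I - A$ would be invertible, contradicting $\lambda \in \sigma(A)$. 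Therefore $\lambda$ must lie in the union of the column disks, establishing (i).

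The main (very mild) obstacle here is purely bookkeeping: verifying that the substitution $B = \lambda I - A$ leaves the off-diagonal column sums unchanged (since absolute values are unaffected by the sign change, and the diagonal perturbation only touches $b_{k,k}$). Aside from this, the argument is a direct mirror of the row-version proof and requires no new ideas.
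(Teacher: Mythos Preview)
Your proof is correct. The paper does not actually supply a proof for this particular cited theorem (it is quoted from \cite{VARGA} without argument), but your contrapositive argument for $(\mathrm{i})\Rightarrow(\mathrm{ii})$ and the $B=\lambda I-A$ construction for $(\mathrm{ii})\Rightarrow(\mathrm{i})$ are exactly the template the paper itself uses when it proves the analogous non-Archimedean equivalences (Theorems~\ref{E1} and~\ref{E2}); in that sense your approach is the same as the paper's.
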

In 1947, Brauer derived following generalization of Theorem \ref{GEIT}, known as the Brauer oval (of Cassini) theorem which uses two rows/columns  for determining the radius of the oval.
\begin{theorem} \cite{BRAUER, VARGA, MARCUSMINC} \label{BET} (\textbf{Brauer Eigenvalue Inclusion Theorem} or \textbf{Brauer Oval (of Cassini) Theorem})
	For every $A=[a_{j,k}]_{1\leq j\leq n, 1\leq k \leq n}\in  \mathbb{M}_n(\mathbb{C})$,
\begin{align*}
	\sigma(A)\subseteq \bigcup_{j,k=1, j \neq k}^n\{z \in \mathbb{C}: |z-a_{j, j}||z-a_{k,k}|\leq r_j(A)r_k(A)\}
\end{align*}
and 
\begin{align*}
	\sigma(A)\subseteq \bigcup_{j,k=1, j \neq k}^n\{z \in \mathbb{C}: |z-a_{j, j}||z-a_{k,k}|\leq c_j(A)c_k(A)\}.
\end{align*}
\end{theorem}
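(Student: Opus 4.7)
The plan is to prove the row-based inclusion directly from the eigenvalue equation, and then obtain the column-based inclusion as an immediate corollary by applying the row version to the transpose $A^T$ (using $\sigma(A^T)=\sigma(A)$ and $r_j(A^T)=c_j(A)$). I would silently assume $n\geq 2$, since otherwise the union indexed over $j\neq k$ is empty.

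Fix $\lambda\in\sigma(A)$ and pick a nonzero eigenvector $x=(x_1,\dots,x_n)^T$. The key step, and the real engine of the proof, is to select \emph{two} indices rather than one: let $p$ be an index achieving $|x_p|=\max_{i}|x_i|$, and let $q\neq p$ be an index achieving $|x_q|=\max_{i\neq p}|x_i|$. Before the main argument, I would dispatch the degenerate case $|x_q|=0$: then $x$ is supported at the single coordinate $p$, the eigenvalue equation forces $\lambda=a_{p,p}$, and the target inequality holds trivially for any $q\neq p$.

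In the main case $|x_q|>0$, I would write the $p$-th and $q$-th coordinates of $Ax=\lambda x$ as
\begin{align*}
(\lambda-a_{p,p})x_p=\sum_{k\neq p}a_{p,k}x_k,\qquad (\lambda-a_{q,q})x_q=\sum_{k\neq q}a_{q,k}x_k,
\end{align*}
apply the triangle inequality, and then invoke the maximality properties $|x_k|\leq |x_q|$ for $k\neq p$ and $|x_k|\leq |x_p|$ for $k\neq q$ to upgrade these identities into two scalar estimates of the form $|\lambda-a_{p,p}|\,|x_p|\leq r_p(A)|x_q|$ and $|\lambda-a_{q,q}|\,|x_q|\leq r_q(A)|x_p|$. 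Multiplying them and cancelling the strictly positive factor $|x_p||x_q|$ is exactly the pairwise oval inequality for $(p,q)$.

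The step I expect to be the main conceptual obstacle is precisely the two-index trick: the Gershgorin argument needs only the single largest component of $x$, whereas here one must simultaneously exploit the two largest components so that the two triangle-inequality estimates can be multiplied, not added, into a single clean bound. This multiplicative packaging is what replaces each disk by a Cassini oval and explains why the statement fundamentally requires two rows rather than one.
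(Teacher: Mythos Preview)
Your proposal is correct and is precisely the classical Brauer two-index argument. The paper does not supply its own proof of this (complex) statement---it is cited from the literature---but the paper's proof of the non-Archimedean analogue (Theorem~\ref{NAB}) follows exactly your outline: select the largest and second-largest eigenvector components $x_j,x_k$, dispose of the degenerate case $|x_k|=0$, then multiply the $j$-th and $k$-th coordinate estimates and cancel $|x_j||x_k|$.
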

By applying Brauer's theorem, we get following results.
\begin{theorem} 
	Let $p(z)\coloneqq c_0+c_1z+\cdots +c_{n-1}z^{n-1}+z^n \in \mathbb{C}[z]$.  If $\lambda$ is a  zero of $p$, then 
	\begin{align*}
		|\lambda|\leq 1
	\end{align*}
	or 
	\begin{align*}
		|\lambda||\lambda+c_{n-1}|\leq |c_0|+\cdots +|c_{n-2}|.
	\end{align*}
\end{theorem}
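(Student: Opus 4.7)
The plan is to apply Brauer's Oval Theorem (Theorem~\ref{BET}) to the Frobenius companion matrix $C_p$ of $p$, using the classical fact stated in the excerpt that the eigenvalues of $C_p$ are exactly the zeros of $p$. So every zero $\lambda$ of $p$ lies in a Cassini oval determined by some pair of distinct indices $j\neq k$.

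First, I would read off the diagonal entries and deleted row sums of $C_p$ from the displayed matrix. For $1\le j\le n-1$, row $j$ has diagonal entry $a_{j,j}=0$ and exactly one off-diagonal nonzero entry, namely a $1$ in column $j+1$; hence $r_j(C_p)=1$. The last row has diagonal entry $a_{n,n}=-c_{n-1}$ and off-diagonal entries $-c_0,-c_1,\ldots,-c_{n-2}$, so
\[
r_n(C_p)=|c_0|+|c_1|+\cdots+|c_{n-2}|.
\]

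Next I would apply Theorem~\ref{BET} and split into two cases according to whether the index $n$ appears in the chosen pair $\{j,k\}$. If both $j,k\in\{1,\dots,n-1\}$, the Cassini inequality reduces to $|\lambda|\,|\lambda|\le 1\cdot 1$, so $|\lambda|\le 1$. Otherwise one of the indices equals $n$, say $k=n$, and the inequality becomes
\[
|\lambda|\,|\lambda+c_{n-1}|\ \le\ 1\cdot\bigl(|c_0|+|c_1|+\cdots+|c_{n-2}|\bigr),
\]
which is precisely the second alternative in the statement. Combining the two cases yields the claim.

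There is no genuine obstacle here; the argument is a direct specialization of Theorem~\ref{BET} to $C_p$, and the only care required is the bookkeeping of diagonal entries and row sums described above (in particular noting that $|\lambda-a_{n,n}|=|\lambda-(-c_{n-1})|=|\lambda+c_{n-1}|$).
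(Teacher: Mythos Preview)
Your proof is correct and follows exactly the approach the paper indicates: it applies Brauer's Oval Theorem (Theorem~\ref{BET}) to the companion matrix $C_p$, reads off the diagonal entries and deleted row sums, and splits according to whether the distinguished index $n$ appears in the pair $\{j,k\}$. This is precisely what the paper means by ``By applying Brauer's theorem, we get following results,'' and your bookkeeping of $a_{j,j}$, $r_j(C_p)$, and the identification $|\lambda-a_{n,n}|=|\lambda+c_{n-1}|$ is accurate.
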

\begin{theorem} 
	Let $p(z)\coloneqq c_0+c_1z+\cdots +c_{n-1}z^{n-1}+z^n \in \mathbb{C}[z]$.  If $\lambda$ is a  zero of $p$, then 
\begin{align*}
	|\lambda|^2\leq |c_0|(1+|c_j|),  \quad \text{ for some } 1\leq j \leq n-2
\end{align*}
or 
\begin{align*}
	|\lambda||\lambda +c_{n-1}|\leq |c_0|
\end{align*}
or 
\begin{align*}
	|\lambda|^2\leq (1+|c_{j}|)(1+|c_{k}|), \quad \text{ for some } 1\leq j, k \leq n-2, j \neq k
\end{align*}
or 
\begin{align*}
	|\lambda||\lambda +c_{n-1}|\leq 1+|c_{j}|, \quad \text{ for some } 1\leq j \leq n-2.
\end{align*}
\end{theorem}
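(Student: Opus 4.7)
The plan is to apply Brauer's oval theorem (Theorem \ref{BET}) in its column-sum form to the Frobenius companion matrix $C_p$, whose eigenvalues coincide with the zeros of $p$. The immediately preceding theorem was obtained by applying the row-sum form of Theorem \ref{BET} to $C_p$, so the sharper and more numerous bounds here should drop out of the column-sum form, which reads finer information off each individual column.

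First I would record the data of $C_p$: the diagonal entries are $a_{j,j}=0$ for $1\leq j\leq n-1$ and $a_{n,n}=-c_{n-1}$, while the column sums are
\begin{align*}
c_1(C_p)=|c_0|,\qquad c_k(C_p)=1+|c_{k-1}|\text{ for }2\leq k\leq n-1,\qquad c_n(C_p)=1.
\end{align*}
Then, for any zero $\lambda$ of $p$, Theorem \ref{BET} supplies a pair of distinct indices $j\neq k$ with $|\lambda-a_{j,j}||\lambda-a_{k,k}|\leq c_j(C_p)c_k(C_p)$. I would split the unordered pair $\{j,k\}$ into four exhaustive cases: (i) $\{1,k\}$ with $2\leq k\leq n-1$, (ii) $\{1,n\}$, (iii) $\{j,k\}\subseteq\{2,\dots,n-1\}$, and (iv) $\{k,n\}$ with $2\leq k\leq n-1$. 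Substituting the diagonals and column sums above into the Brauer inequality and relabeling $k-1\mapsto j$ (in cases (i) and (iv)) or $(j-1,k-1)\mapsto (j,k)$ (in case (iii)) so that the new indices range over $\{1,\dots,n-2\}$ produces exactly the four alternatives in the statement.

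The only real obstacle is the bookkeeping of the index shift caused by the fact that the $k$-th column sum $c_k(C_p)$ depends on $c_{k-1}$ and not $c_k$; once that is handled, each of the four cases is a direct substitution with no further estimation required.
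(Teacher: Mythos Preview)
Your proposal is correct and is exactly the argument the paper intends: the paper simply states ``By applying Brauer's theorem, we get following results'' and lists this theorem as the outcome of the column-sum version of Theorem~\ref{BET} applied to $C_p$, which is precisely what you carry out. Your computation of the diagonal entries, the column sums $c_1(C_p)=|c_0|$, $c_k(C_p)=1+|c_{k-1}|$ for $2\le k\le n-1$, $c_n(C_p)=1$, and the four-way case split on the unordered pair $\{j,k\}$ (with the index shift $k-1\mapsto j$) are all accurate and produce the four alternatives verbatim.
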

It is known that Brauer theorem cannot be extended by considering three rows/columns \cite{VARGA}.  
In 1937, Ostrowski derived following generalization of Theorem \ref{SDDT}.
\begin{theorem} \cite{VARGA, OSTROWSKI} \label{ONT} (\textbf{Ostrowski Nonsingularity Theorem})
	If  $A=[a_{j,k}]_{1\leq j\leq n, 1\leq k \leq n}\in  \mathbb{M}_n(\mathbb{C})$ satisfies
	\begin{align*}
		|a_{j,j}||a_{k,k}|>r_j(A)r_k(A), \quad \forall 1\leq j, k \leq n, j \neq k
	\end{align*}
	or 
	\begin{align*}
		|a_{j,j}||a_{k,k}|>c_j(A)c_k(A), \quad \forall 1\leq j, k \leq n, j \neq k,
	\end{align*}
	then $A$ is invertible. 	
\end{theorem}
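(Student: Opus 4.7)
The plan is to argue by contradiction using the two largest coordinates of a hypothetical null vector, which is the classical double-index refinement of the argument behind Theorem \ref{SDDT}. Suppose $A$ is singular; then there exists $x=(x_1,\dots,x_n)^T\neq 0$ with $Ax=0$. For each index $j$, the $j$-th equation rearranges to
\[
a_{j,j}x_j=-\sum_{k\neq j}a_{j,k}x_k,
\]
so the triangle inequality gives $|a_{j,j}||x_j|\leq \sum_{k\neq j}|a_{j,k}||x_k|$.

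Next I would choose $p$ with $|x_p|=\max_{i}|x_i|>0$ and $q\neq p$ with $|x_q|=\max_{i\neq p}|x_i|$. The point is that in the $p$-th inequality every $|x_k|$ ($k\neq p$) is at most $|x_q|$, and in the $q$-th inequality every $|x_k|$ ($k\neq q$) is at most $|x_p|$, so factoring these bounds out of the sums yields
\[
|a_{p,p}||x_p|\leq r_p(A)|x_q|,\qquad |a_{q,q}||x_q|\leq r_q(A)|x_p|.
\]
Multiplying these two inequalities produces $|a_{p,p}||a_{q,q}||x_p||x_q|\leq r_p(A)r_q(A)|x_p||x_q|$, and if $|x_q|>0$, we cancel $|x_p||x_q|$ to contradict the hypothesis $|a_{p,p}||a_{q,q}|>r_p(A)r_q(A)$.

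The one place I would be careful is the degenerate case $|x_q|=0$, i.e.\ only a single coordinate $x_p$ is nonzero. There the $p$-th equation collapses to $a_{p,p}x_p=0$, forcing $a_{p,p}=0$; but then for any $k\neq p$ the hypothesis demands $|a_{p,p}||a_{k,k}|>r_p(A)r_k(A)\geq 0$, which is impossible. (This also takes care of any issues with the edge case $n=1$ being excluded implicitly by the hypothesis being nonvacuous for $n\geq 2$.)

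Finally, for the column version I would simply apply the row version to $A^T$, using that $r_j(A^T)=c_j(A)$ and that $A$ is invertible if and only if $A^T$ is. I do not expect a genuine obstacle; the only subtle point is the handling of the single-nonzero-entry case, and it is resolved immediately by the same hypothesis that drives the main argument.
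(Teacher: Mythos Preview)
Your argument is correct, including the careful handling of the degenerate case $|x_q|=0$ and the transpose trick for the column version. Note, however, that the paper does not actually supply its own proof of Theorem~\ref{ONT}; it is quoted as a classical result with citations to Ostrowski and Varga. The closest thing in the paper to a proof is the route taken for the non-Archimedean analogue: the paper first proves the non-Archimedean Brauer Oval Theorem (Theorem~\ref{NAB}) by precisely the two-largest-coordinates contradiction you describe (applied to an eigenvector rather than a null vector), and then obtains the non-Archimedean Ostrowski/Li--Li nonsingularity theorem via the equivalence in Theorem~\ref{E1}. Your direct proof short-circuits that detour by taking $\lambda=0$ from the outset, but the underlying combinatorial idea---pick $p$ with $|x_p|$ maximal, then $q\neq p$ with $|x_q|$ second-largest, bound the $p$-th and $q$-th rows, and multiply---is identical to what the paper does in the proof of Theorem~\ref{NAB}.
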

It is known that Theorems \ref{BET} and \ref{ONT} are again equivalent. 
\begin{theorem} \cite{VARGA}
	Let $n \in \mathbb{N}$.  Following two statements are equivalent. 
	\begin{enumerate}[\upshape(i)]
		\item Let $A=[a_{j,k}]_{1\leq j\leq n, 1\leq k \leq n}\in  \mathbb{M}_n(\mathbb{C})$. Then 
		\begin{align*}
			\sigma(A)\subseteq \bigcup_{j,k=1, j \neq k}^n\{z \in \mathbb{C}: |z-a_{j, j}||z-a_{k,k}|\leq r_j(A)r_k(A)\}.
		\end{align*}
		\item 	If  $B=[b_{j,k}]_{1\leq j\leq n, 1\leq k \leq n}\in  \mathbb{M}_n(\mathbb{C})$ satisfies
		\begin{align*}
			|b_{j,j}||b_{k,k}|>	r_j(B)r_k(B), \quad \forall 1\leq j, k \leq n, j \neq k,
		\end{align*}
		then $B$ is invertible. 
	\end{enumerate}	
\end{theorem}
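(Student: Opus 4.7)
The plan is to prove the equivalence by the standard two directions, exploiting the fact that shifting the diagonal of a matrix $A$ by $-\lambda I$ preserves the off-diagonal entries, hence preserves the row deleted-sums $r_j$.

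For the direction (i) $\Rightarrow$ (ii), I would argue by contrapositive. Assume $B=[b_{j,k}]$ is not invertible, so that $0\in\sigma(B)$. Applying statement (i) to $B$ with the eigenvalue $\lambda=0$, there must exist indices $j\neq k$ with
\begin{align*}
|0-b_{j,j}||0-b_{k,k}|\leq r_j(B)r_k(B),
\end{align*}
i.e.\ $|b_{j,j}||b_{k,k}|\leq r_j(B)r_k(B)$. This directly contradicts the hypothesis of (ii), so $B$ must be invertible.

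For the direction (ii) $\Rightarrow$ (i), let $A=[a_{j,k}]$ and let $\lambda\in\sigma(A)$. Set $B\coloneqq A-\lambda I$. Then $B$ is singular, so the conclusion of (ii) fails for $B$, which means its hypothesis must fail: there exist $j\neq k$ such that $|b_{j,j}||b_{k,k}|\leq r_j(B)r_k(B)$. The key observation is that $B$ differs from $A$ only on the diagonal, so $r_j(B)=r_j(A)$ and $r_k(B)=r_k(A)$, while $b_{j,j}=a_{j,j}-\lambda$ and $b_{k,k}=a_{k,k}-\lambda$. Substituting,
\begin{align*}
|\lambda-a_{j,j}||\lambda-a_{k,k}|\leq r_j(A)r_k(A),
\end{align*}
which places $\lambda$ in the Brauer oval indexed by $(j,k)$, establishing (i).

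There is essentially no obstacle here: both directions are one-line manipulations once one sees the trick of reducing eigenvalue inclusion to a singularity statement for $A-\lambda I$, which is precisely the same technique used to prove the equivalence of the Gershgorin disk theorem and the Levy--Desplanques theorem earlier in the paper. The only mild subtlety worth mentioning is that the shift $A\mapsto A-\lambda I$ is used in the contrapositive form of (ii), and that the off-diagonal invariance $r_j(A-\lambda I)=r_j(A)$ is what makes the argument go through cleanly.
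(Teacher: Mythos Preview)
Your proof is correct and follows essentially the same route as the paper. The paper does not give a proof for this cited complex-field statement itself, but its proof of the non-Archimedean analogue (Theorem~\ref{E1}) uses exactly your strategy: for (i)$\Rightarrow$(ii) assume $B$ singular, apply (i) with $\lambda=0$ to contradict the strict inequality; for (ii)$\Rightarrow$(i) set $B=\lambda I_n-A$ (you use $A-\lambda I$, which is immaterial), observe $r_j(B)=r_j(A)$, and use the contrapositive of (ii) to place $\lambda$ in some oval.
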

\begin{theorem} \cite{VARGA}
	Let $n \in \mathbb{N}$.  Following two statements are equivalent. 
	\begin{enumerate}[\upshape(i)]
		\item Let $A=[a_{j,k}]_{1\leq j\leq n, 1\leq k \leq n}\in  \mathbb{M}_n(\mathbb{C})$. Then 
		\begin{align*}
			\sigma(A)\subseteq \bigcup_{j,k=1, j \neq k}^n\{z \in \mathbb{C}: |z-a_{j, j}||z-a_{k,k}|\leq c_j(A)c_k(A)\}.
		\end{align*}
		\item 	If  $B=[b_{j,k}]_{1\leq j\leq n, 1\leq k \leq n}\in  \mathbb{M}_n(\mathbb{C})$ satisfies
		\begin{align*}
			|b_{j,j}||b_{k,k}|>	c_j(B)c_k(B), \quad \forall 1\leq j, k \leq n, j \neq k,
		\end{align*}
		then $B$ is invertible. 
	\end{enumerate}	
\end{theorem}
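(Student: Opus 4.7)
The plan is to prove the equivalence by showing each direction via a translation trick that converts statements about eigenvalues into statements about singularity, exploiting the fact that off-diagonal absolute column sums are invariant under subtracting a scalar multiple of the identity.

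For the direction (i) $\Rightarrow$ (ii), I would argue by contrapositive. Suppose $B = [b_{j,k}]$ is singular, so $0 \in \sigma(B)$. Applying (i) to $B$ with $\lambda = 0$, there exist indices $j \neq k$ such that $|0 - b_{j,j}||0 - b_{k,k}| \leq c_j(B)c_k(B)$, i.e., $|b_{j,j}||b_{k,k}| \leq c_j(B)c_k(B)$. This negates the hypothesis of (ii), giving its contrapositive and hence (ii) itself.

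For the direction (ii) $\Rightarrow$ (i), I would take an arbitrary $A \in \mathbb{M}_n(\mathbb{C})$ and $\lambda \in \sigma(A)$, and form the matrix $B \coloneqq \lambda I - A$. The key observations are that $B$ is singular (because $\lambda$ is an eigenvalue of $A$), that the diagonal entries of $B$ are $b_{j,j} = \lambda - a_{j,j}$, and that the off-diagonal entries of $B$ are just negatives of those of $A$, so $c_j(B) = c_j(A)$ for each $j$. Applying the contrapositive of (ii) to $B$, there must exist $j \neq k$ with $|b_{j,j}||b_{k,k}| \leq c_j(B)c_k(B)$, which translates to $|\lambda - a_{j,j}||\lambda - a_{k,k}| \leq c_j(A)c_k(A)$. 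This places $\lambda$ in the desired union of Cassini ovals, establishing (i).

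There is no real obstacle: both directions rest on the elementary equivalence ``singular'' $\Leftrightarrow$ ``$0$ is an eigenvalue'' and on the translation invariance of the column sums $c_j$, which is immediate since the sum only involves off-diagonal entries. The proof is essentially the same bookkeeping as in the analogous equivalence between Theorem \ref{GEIT} and Theorem \ref{SDDT}, just lifted to the product form of the Brauer--Ostrowski setting.
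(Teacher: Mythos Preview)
Your proposal is correct and matches the paper's approach. The paper does not prove this cited complex result directly, but its proof of the non-Archimedean analogue (Theorem~\ref{E1}) follows exactly the same scheme: for (i)$\Rightarrow$(ii) assume $B$ is singular and apply (i) at $\lambda=0$ to obtain a contradiction, and for (ii)$\Rightarrow$(i) form $B=\lambda I-A$, use that the off-diagonal column sums are unchanged, and apply the contrapositive of (ii).
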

It is natural to ask what are versions of Theorems \ref{GEIT}, \ref{FROBENIUST}, \ref{SDDT}, \ref{BET} and \ref{ONT}  for matrices over non-Archimedean fields? 
In the paper,  $\mathbb{K}$ denotes a non-Archimedean valued field with valuation $|\cdot|$. Let $A=[a_{j,k}]_{1\leq j\leq n, 1\leq k \leq n} \in \mathbb{M}_n(\mathbb{K})$. For $1\leq j \leq n$, define 
\begin{align*}
	h_j(A)\coloneqq\max_{1\leq k \leq n, k \neq j}|a_{j,k}|.
\end{align*}
For $1\leq k \leq n$, define 
\begin{align*}
	v_k(A)\coloneqq\max_{1\leq j \leq n, j \neq k}|a_{j,k}|.
\end{align*}

In 2023, Nica and Sprague derived the  following non-Archimedean analogue of Theorems \ref{GEIT} and \ref{SDDT}. 
\begin{theorem} \label{NS}  \cite{NICASPRAGUE} (\textbf{Non-Archimedean Gershgorin Eigenvalue Inclusion Theorem} or \textbf{Nica-Sprague Disk Theorem}) 	For every $A=[a_{j,k}]_{1\leq j\leq n, 1\leq k \leq n}\in  \mathbb{M}_n(\mathbb{K})$,
	\begin{align*}
		\sigma(A)\subseteq \bigcup_{j=1}^n\{z \in \mathbb{K}: |z-a_{j, j}|\leq h_j(A)\}
	\end{align*}
	and 
	\begin{align*}
		\sigma(A)\subseteq \bigcup_{k=1}^n\{z \in \mathbb{K}: |z-a_{k, k}|\leq v_k(A)\}. 
	\end{align*}
\end{theorem}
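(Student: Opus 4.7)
The plan is to mimic the classical Gershgorin eigenvector argument, replacing the Archimedean triangle inequality with the ultrametric inequality $|a+b| \leq \max(|a|,|b|)$, which actually makes the bookkeeping cleaner because a sum is dominated by a single maximum term rather than by a cumulative sum.

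For the row inclusion, I would start with $\lambda \in \sigma(A)$, pick any nonzero eigenvector $x=(x_1,\dots,x_n)^T \in \mathbb{K}^n$ with $Ax=\lambda x$, and choose an index $j$ with $|x_j|=\max_{1\leq k \leq n}|x_k|>0$ (this maximum is attained since the set is finite). Reading off the $j$-th coordinate of $Ax=\lambda x$ gives
\begin{align*}
(\lambda - a_{j,j})x_j = \sum_{k \neq j} a_{j,k}x_k,
\end{align*}
and the ultrametric inequality yields
\begin{align*}
|\lambda - a_{j,j}|\,|x_j| = \left|\sum_{k \neq j}a_{j,k}x_k\right| \leq \max_{k \neq j}|a_{j,k}|\,|x_k| \leq h_j(A)\,|x_j|,
\end{align*}
so cancelling the positive scalar $|x_j|$ gives $|\lambda - a_{j,j}|\leq h_j(A)$, placing $\lambda$ in the $j$-th disk.

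For the column inclusion, I would simply transpose: the characteristic polynomial satisfies $\det(A - \lambda I)=\det(A^T - \lambda I)$, so $\sigma(A)=\sigma(A^T)$; applying the row inclusion to $A^T$, and noting $(A^T)_{j,j}=a_{j,j}$ and $h_j(A^T)=v_j(A)$, gives the second inclusion.

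I do not anticipate any real obstacle: the non-Archimedean version is \emph{easier} than the Archimedean one, because $\bigl|\sum_{k \neq j} a_{j,k}x_k\bigr| \leq \max_{k\neq j}|a_{j,k} x_k|$ is strictly stronger than the ordinary triangle inequality, and the classical proof structure (maximum-coordinate eigenvector plus transpose duality) transfers verbatim. The only point needing a line of justification is that an eigenvector exists at all over $\mathbb{K}$, which follows because $\lambda \in \sigma(A)$ means $\det(A-\lambda I)=0$ in $\mathbb{K}$, so $A-\lambda I$ has nontrivial kernel as a linear map on $\mathbb{K}^n$.
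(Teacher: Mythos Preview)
Your argument is correct and is precisely the standard Gershgorin eigenvector argument with the ultrametric inequality in place of the Archimedean one; there is no gap. Note, however, that the paper does not actually supply its own proof of this statement---Theorem~\ref{NS} is quoted from \cite{NICASPRAGUE} as background---so there is nothing in the paper to compare your proof against directly. That said, your argument is exactly the one-index specialization of the paper's proof of the Brauer oval theorem (Theorem~\ref{NAB}): there the author picks the two largest-modulus coordinates $x_j, x_k$ and multiplies the corresponding row equations, whereas you pick only the single largest coordinate $x_j$ and use the $j$-th row alone; the transpose trick for the column inclusion is likewise identical to what the paper does at the end of that proof.
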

\begin{theorem} \label{NS2} \cite{NICASPRAGUE} (\textbf{Non-Archimedean Strict Diagonal Dominance Theorem} or \textbf{Nica-Sprague Nonsingularity Theorem}) 
	If  $A=[a_{j,k}]_{1\leq j\leq n, 1\leq k \leq n}\in  \mathbb{M}_n(\mathbb{K})$ satisfies
	\begin{align*}
		|a_{j,j}|>h_j(A), \quad \forall 1\leq j \leq n
	\end{align*}
	or 
	\begin{align*}
		|a_{k,k}|>v_k(A), \quad \forall 1\leq k \leq n,
	\end{align*}
	then $A$ is invertible. 	
\end{theorem}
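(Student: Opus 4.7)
The plan is to derive Theorem \ref{NS2} as an immediate contrapositive of the non-Archimedean Gershgorin disk theorem (Theorem \ref{NS}), which is already available in the excerpt. Singularity of $A$ is equivalent to $0\in\sigma(A)$; the row hypothesis $|a_{j,j}|>h_j(A)$ for every $j$ says precisely that $|0-a_{j,j}|>h_j(A)$ for every $j$, so $0$ lies outside every non-Archimedean Gershgorin disk, contradicting Theorem \ref{NS}. The column statement follows identically from the column form of Theorem \ref{NS}.

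If a self-contained argument is preferred (essentially reproducing the idea behind Theorem \ref{NS}), I would proceed by contradiction. Assume there is a nonzero $x=(x_1,\dots,x_n)^T\in\mathbb{K}^n$ with $Ax=0$, and choose an index $j$ with $|x_j|=\max_{1\leq i\leq n}|x_i|>0$. Isolating the diagonal entry in the $j$-th row equation gives $a_{j,j}x_j=-\sum_{k\neq j}a_{j,k}x_k$, and applying the ultrametric inequality yields
\begin{align*}
|a_{j,j}|\,|x_j|\;\leq\;\max_{k\neq j}|a_{j,k}|\,|x_k|\;\leq\;h_j(A)\,|x_j|.
\end{align*}
Dividing by $|x_j|>0$ produces $|a_{j,j}|\leq h_j(A)$, contradicting the row hypothesis. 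The column case reduces to the row case by applying it to $A^T$, since $A$ and $A^T$ have the same rank and $h_j(A^T)=v_j(A)$.

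I expect no substantive obstacle. The only point worth flagging is \emph{why} the sharper quantity $h_j(A)=\max_{k\neq j}|a_{j,k}|$ suffices in place of the classical row-sum $r_j(A)=\sum_{k\neq j}|a_{j,k}|$: this is exactly the upgrade provided by the strong triangle inequality $|\sum_i y_i|\leq\max_i|y_i|$ in a non-Archimedean valued field. Everything else is formal, and the same strong triangle inequality drives both the direct proof above and the proof of Theorem \ref{NS} that it would invoke.
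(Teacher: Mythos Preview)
Your proposal is correct. Note, however, that the paper does not supply its own proof of Theorem~\ref{NS2}: it is quoted as a background result from \cite{NICASPRAGUE}, and the equivalence of Theorems~\ref{NS} and~\ref{NS2} (which is exactly your first argument) is likewise only cited from \cite{NICASPRAGUE}. Your contrapositive derivation from Theorem~\ref{NS} and your self-contained ultrametric argument are both the standard routes; they match the pattern the paper later uses explicitly in proving the analogous equivalence for the Brauer/Ostrowski pair in Theorem~\ref{E1}.
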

Nica and Sprague showed that  Theorems \ref{NS} and \ref{NS2} are equivalent. 
\begin{theorem} \cite{NICASPRAGUE}
	Let $n \in \mathbb{N}$.  Following two statements are equivalent. 
	\begin{enumerate}[\upshape(i)]
		\item Let $A=[a_{j,k}]_{1\leq j\leq n, 1\leq k \leq n}\in  \mathbb{M}_n(\mathbb{K})$. Then 
		\begin{align*}
			\sigma(A)\subseteq \bigcup_{j=1}^n\{z \in \mathbb{K}: |z-a_{j, j}|\leq h_j(A)\}.
		\end{align*}
		\item 	If  $B=[b_{j,k}]_{1\leq j\leq n, 1\leq k \leq n}\in  \mathbb{M}_n(\mathbb{K})$ satisfies
		\begin{align*}
			|b_{j,j}|>h_j(B), \quad \forall 1\leq j \leq n,
		\end{align*}
		then $B$ is invertible. 
	\end{enumerate}	
\end{theorem}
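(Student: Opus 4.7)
The plan is to adapt the standard two-direction argument from Varga's classical Archimedean equivalence: the non-Archimedean valuation plays no role in the logical structure of this equivalence, only in the underlying disk theorem itself. The key observation is that both statements (i) and (ii) involve the same quantities $a_{j,j}$ and $h_j(A)$, and being an eigenvalue of $A$ is precisely the failure of $A - \lambda I$ to be invertible, while shifting by a scalar multiple of the identity leaves all off-diagonal entries and hence all $h_j$ values unchanged.

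For (i) $\Rightarrow$ (ii), I would argue by contrapositive. Suppose $B$ is singular, so $0 \in \sigma(B)$. Applying (i) to $B$ with $\lambda = 0$ gives some index $j$ with $|0 - b_{j,j}| \leq h_j(B)$, that is $|b_{j,j}| \leq h_j(B)$, which contradicts the strict diagonal dominance hypothesis. Hence $B$ must be invertible.

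For (ii) $\Rightarrow$ (i), let $A$ be arbitrary and take any $\lambda \in \sigma(A)$. Set $B := A - \lambda I$; then $B$ is singular and has the same off-diagonal entries as $A$, so $h_j(B) = h_j(A)$ for each $j$, while $b_{j,j} = a_{j,j} - \lambda$. If $\lambda$ failed to lie in any of the Nica-Sprague disks of $A$, we would have $|a_{j,j} - \lambda| > h_j(A) = h_j(B)$ for every $j$, and then (ii) would force $B$ to be invertible, a contradiction. Thus $\lambda$ lies in at least one disk, establishing (i).

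I do not expect any real obstacle: the argument is formal and relies only on the algebraic fact that $\lambda \in \sigma(A)$ iff $A - \lambda I$ is singular, together with the invariance $h_j(A - \lambda I) = h_j(A)$. No ultrametric-specific tool is needed for the equivalence; the non-Archimedean character of $|\cdot|$ enters only in the substantive proof of Theorem \ref{NS} (and the parallel statement for columns yields the analogous equivalence with $v_k$ in place of $h_j$ by an identical argument).
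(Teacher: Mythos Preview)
Your proposal is correct, and the argument is exactly the one that works. Note, however, that the paper itself does not give a proof of this particular statement: it is cited from Nica and Sprague. What the paper does prove in detail is the analogous equivalence for the Brauer oval version (Theorem~\ref{E1}), and your argument mirrors that proof line by line --- contrapositive for (i)$\Rightarrow$(ii) via $0\in\sigma(B)$, and for (ii)$\Rightarrow$(i) a shift $B=\lambda I_n-A$ (you use $A-\lambda I$, which makes no difference since $|{-x}|=|x|$) together with the observation $h_j(B)=h_j(A)$. So your approach is essentially the same as the paper's method, just specialized from products of two rows to single rows.
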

\begin{theorem} \cite{NICASPRAGUE}
	Let $n \in \mathbb{N}$.  Following two statements are equivalent. 
	\begin{enumerate}[\upshape(i)]
		\item Let $A=[a_{j,k}]_{1\leq j\leq n, 1\leq k \leq n}\in  \mathbb{M}_n(\mathbb{K})$. Then 
		\begin{align*}
			\sigma(A)\subseteq \bigcup_{k=1}^n\{z \in \mathbb{K}: |z-a_{k,k}|\leq v_k(A)\}.
		\end{align*}
		\item 	If  $B=[b_{j,k}]_{1\leq j\leq n, 1\leq k \leq n}\in  \mathbb{M}_n(\mathbb{K})$ satisfies
		\begin{align*}
			|b_{k,k}|>v_k(B), \quad \forall 1\leq k \leq n,
		\end{align*}
		then $B$ is invertible. 
	\end{enumerate}	
\end{theorem}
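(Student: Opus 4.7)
The plan is to establish the equivalence by the standard translation argument $B = \lambda I - A$, exactly paralleling the proof of the preceding row-version equivalence theorem, with rows replaced by columns throughout. The key observation that drives both directions is that, for any scalar $\lambda$ and any index $k$, the off-diagonal entries in the $k$th column of $\lambda I - A$ coincide (up to sign) with those of $-A$, so $v_k(\lambda I - A) = v_k(A)$, while the diagonal entry becomes $\lambda - a_{k,k}$.

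For (i) $\Rightarrow$ (ii), I would argue by contradiction. Assume (i) holds and let $B \in \mathbb{M}_n(\mathbb{K})$ satisfy $|b_{k,k}| > v_k(B)$ for every $k$. If $B$ were singular, then $0$ would lie in $\sigma(B)$, so by (i) applied to $B$ there would exist some index $k$ with $|0 - b_{k,k}| \leq v_k(B)$, i.e., $|b_{k,k}| \leq v_k(B)$, contradicting the hypothesis. Hence $B$ is invertible.

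For (ii) $\Rightarrow$ (i), let $A \in \mathbb{M}_n(\mathbb{K})$ and $\lambda \in \sigma(A)$. Then $B \coloneqq \lambda I - A$ is singular. By the contrapositive of (ii), $B$ must fail the column strict diagonal dominance condition: there exists some $k$ with $|b_{k,k}| \leq v_k(B)$. Using the observation above, this reads $|\lambda - a_{k,k}| \leq v_k(A)$, placing $\lambda$ in the $k$th disk of the union in (i).

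I do not expect any serious obstacle: the argument is purely formal, relying only on the definition of eigenvalue, the translation invariance of the column off-diagonal maxima, and a contrapositive. The only step that requires attention is verifying $v_k(\lambda I - A) = v_k(A)$, which holds because the valuation on $\mathbb{K}$ satisfies $|-x| = |x|$ and the off-diagonal entries of column $k$ are unaffected (up to sign) by subtracting $\lambda I$; this is the same small verification needed in the Archimedean column-version equivalence and in the row-version equivalence already recorded above.
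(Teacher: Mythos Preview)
Your proposal is correct. The paper does not supply its own proof of this particular statement (it is cited from \cite{NICASPRAGUE}), but your argument is exactly the same translation/contrapositive template that the paper uses to prove the analogous Brauer-oval equivalence (Theorem~\ref{E1}), specialized from ovals to single disks and from $h_j$ to $v_k$; in particular your key observation $v_k(\lambda I-A)=v_k(A)$ is the column analogue of the paper's line ``$h_j(A)=h_j(B)$ for all $1\leq j\leq n$.''
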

Non-Archimedean version of Theorem \ref{FROBENIUST} reads as follows. 
\begin{theorem}
Let $A=[a_{j,k}]_{1\leq j\leq n, 1\leq k \leq n} \in \mathbb{M}_n(\mathbb{K})$. For every 	$\lambda \in \sigma(A)$, 
	\begin{align*}
		|\lambda| \leq \min \left\{\max_{1\leq j \leq n}\max_{1\leq k \leq n}|a_{j,k}|, \max_{1\leq k \leq n}\max_{1\leq j \leq n}|a_{j,k}|\right\}\leq \frac{1}{2}\left(\max_{1\leq j \leq n}\max_{1\leq k \leq n}|a_{j,k}|+ \max_{1\leq k \leq n}\max_{1\leq j \leq n}|a_{j,k}|\right).
	\end{align*}
	In particular, 
	\begin{align*}
		|\text{det}(A)| &\leq  \min \left\{\left(\max_{1\leq j \leq n}\max_{1\leq k \leq n}|a_{j,k}|\right)^n, \left(\max_{1\leq k \leq n}\max_{1\leq j \leq n}|a_{j,k}|\right)^n\right\}\\
		&\leq \frac{1}{2}\left(\left(\max_{1\leq j \leq n}\max_{1\leq k \leq n}|a_{j,k}|\right)^n+ \left(\max_{1\leq k \leq n}\max_{1\leq j \leq n}|a_{j,k}|\right)^n\right).
	\end{align*}
\end{theorem}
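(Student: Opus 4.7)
The plan is to deduce this result as an immediate consequence of the non-Archimedean Gershgorin disk theorem (Theorem \ref{NS}) together with the ultrametric inequality, without needing any fresh computation on eigenvectors. Given $\lambda \in \sigma(A)$ (viewed, if necessary, inside an algebraic closure $\overline{\mathbb{K}}$, to which the valuation extends uniquely and remains non-Archimedean), Theorem \ref{NS} produces an index $j$ with $|\lambda - a_{j,j}| \leq h_j(A)$. The first main step is to apply the strong triangle inequality:
\begin{align*}
|\lambda| = |(\lambda - a_{j,j}) + a_{j,j}| \leq \max\bigl(|\lambda - a_{j,j}|,\,|a_{j,j}|\bigr) \leq \max\bigl(h_j(A),\,|a_{j,j}|\bigr) = \max_{1\leq k \leq n}|a_{j,k}|.
\end{align*}
Bounding this by the global maximum over $j$ yields $|\lambda| \leq \max_{j}\max_{k}|a_{j,k}|$. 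Running the same argument through the column version of Theorem \ref{NS} gives $|\lambda| \leq \max_{k}\max_{j}|a_{j,k}|$, and taking the smaller of the two quantities produces the first displayed inequality. The trailing inequality $\min(a,b) \leq \tfrac{1}{2}(a+b)$ is purely formal.

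For the determinant statement, the idea is standard: pass to $\overline{\mathbb{K}}$, factor the characteristic polynomial, and write $\det(A) = (-1)^n \prod_{i=1}^n \lambda_i$ where the $\lambda_i$ are the eigenvalues of $A$ counted with algebraic multiplicity. The valuation on $\overline{\mathbb{K}}$ is multiplicative, so $|\det(A)| = \prod_{i=1}^n |\lambda_i|$, and each factor is controlled by the eigenvalue bound just proved. Raising that bound to the $n$-th power and again using $\min \leq \tfrac{1}{2}(\,\cdot\,+\,\cdot\,)$ finishes the proof.

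I do not expect any genuine obstacle here; the only point that deserves a line of care is justifying that one may assume $\lambda \in \overline{\mathbb{K}}$ and that the valuation on $\mathbb{K}$ extends canonically to a non-Archimedean valuation on $\overline{\mathbb{K}}$, so that both Theorem \ref{NS} (which is invariant under the base field, because its proof only uses the ultrametric property) and the product formula for the determinant apply. Everything else is a mechanical rewriting using $\max$ in place of $\sum$, which is exactly the substitution that distinguishes the non-Archimedean framework from the classical one.
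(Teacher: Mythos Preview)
Your argument is correct, and it is precisely the approach the paper implicitly has in mind: the statement is placed immediately after the Nica--Sprague disk theorem (Theorem \ref{NS}) without an explicit proof, as a direct corollary obtained by combining the Gershgorin-type inclusion with the strong triangle inequality, exactly as you do. One small remark: since $\max_{j}\max_{k}|a_{j,k}| = \max_{k}\max_{j}|a_{j,k}|$, the displayed $\min$ and the trailing $\tfrac12(\cdot+\cdot)$ inequality are in fact equalities here, so the content of the theorem really is the single bound $|\lambda|\leq \max_{j,k}|a_{j,k}|$ that you derive; for the determinant estimate one could alternatively bypass the passage to $\overline{\mathbb{K}}$ by applying the ultrametric inequality directly to the Leibniz expansion of $\det(A)$, but your eigenvalue route is equally valid.
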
	
Let $p(z)\coloneqq c_0+c_1z+\cdots +c_{n-1}z^{n-1}+z^n \in \mathbb{K}[z]$. Let 
\begin{align*}
	C_p\coloneqq 
	\begin{pmatrix}
		0&1&0&\cdots &0 &0&0\\
		0&0&1&\cdots &0 &0&0\\
		0&0&0&\cdots &0 &0&0\\
		\vdots &\vdots &\vdots & & \vdots &\vdots &\vdots\\
		0&0&0&\cdots &0 &1&0\\
		0&0&0&\cdots &0 &0&1\\
		-c_0&-c_1&-c_2&\cdots &-c_{n-3} &-c_{n-2}&-c_{n-1}\\
	\end{pmatrix} \in \mathbb{M}_n(\mathbb{K})
\end{align*}
be the companion matrix of $p$. By applying Theorem \ref{NS} Nica and Sprague obtained following results.
\begin{theorem} \cite{NICASPRAGUE}
	Let $p(z)\coloneqq c_0+c_1z+\cdots +c_{n-1}z^{n-1}+z^n \in \mathbb{K}[z]$.  If $\lambda$ is a  zero of $p$, then 
	\begin{align*}
		|\lambda|\leq 1
	\end{align*}
	or 
	\begin{align*}
		|\lambda+c_{n-1}|\leq \max\{|c_0|, |c_1|, \dots,  |c_{n-2}|\}.
	\end{align*}
	In particular, 
	\begin{align*}
	\text{(Nica-Sprague bound)} \quad \quad \quad	|\lambda|\leq \max\{1, |c_0|, |c_1|, \dots, |c_{n-1}|\}.
	\end{align*}
\end{theorem}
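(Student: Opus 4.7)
The plan is to apply Theorem \ref{NS} (the non-Archimedean Gershgorin disk theorem, row version) to the companion matrix $C_p$, using the well-known fact that the eigenvalues of $C_p$ coincide with the zeros of $p$. So if $\lambda$ is a zero of $p$, then $\lambda \in \sigma(C_p)$, and it suffices to compute the quantities $h_j(C_p)$ for $1 \le j \le n$.

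First I would inspect the rows of $C_p$ one by one. For each $j \in \{1, \dots, n-1\}$, the $j$-th row has diagonal entry $0$ and a single nonzero off-diagonal entry, namely a $1$ in column $j+1$; hence $h_j(C_p) = 1$. For $j = n$, the diagonal entry is $-c_{n-1}$ and the off-diagonal entries in that row are $-c_0, -c_1, \dots, -c_{n-2}$, so $h_n(C_p) = \max\{|c_0|, |c_1|, \dots, |c_{n-2}|\}$. Plugging these into Theorem \ref{NS} gives that $\lambda$ lies in the union of disks
\begin{align*}
\bigcup_{j=1}^{n-1} \{z \in \mathbb{K} : |z| \le 1\} \;\cup\; \{z \in \mathbb{K} : |z + c_{n-1}| \le \max\{|c_0|, \dots, |c_{n-2}|\}\},
\end{align*}
which is exactly the dichotomy claimed in the statement.

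For the "in particular" bound, I would handle the two cases separately. In the first case $|\lambda| \le 1$ the conclusion is immediate. In the second case, I would invoke the strong (ultrametric) triangle inequality:
\begin{align*}
|\lambda| = |(\lambda + c_{n-1}) - c_{n-1}| \le \max\{|\lambda + c_{n-1}|, |c_{n-1}|\} \le \max\{|c_0|, |c_1|, \dots, |c_{n-1}|\}.
\end{align*}
Combining the two cases yields $|\lambda| \le \max\{1, |c_0|, \dots, |c_{n-1}|\}$, the Nica-Sprague bound.

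There is no real obstacle here: the proof is a direct translation of the classical companion-matrix argument into the non-Archimedean setting, the only substantive change being that the sums $r_j$ get replaced by the maxima $h_j$, and the final triangle-inequality step becomes an application of the ultrametric inequality rather than the ordinary one. The most error-prone point, and hence the step I would double-check carefully, is the bookkeeping in the bottom row of $C_p$ to ensure that $-c_{n-1}$ is correctly identified as the diagonal entry (and therefore excluded from $h_n(C_p)$), so that $h_n(C_p) = \max\{|c_0|, \dots, |c_{n-2}|\}$ rather than $\max\{|c_0|, \dots, |c_{n-1}|\}$.
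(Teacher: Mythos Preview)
Your proposal is correct and follows exactly the paper's route: the paper obtains this result by applying Theorem~\ref{NS} (row version) to the companion matrix $C_p$, and your computation of the $h_j(C_p)$ together with the ultrametric step for the ``in particular'' bound are spot on. (As an aside, the paper also records a separate companion-matrix-free derivation of the Nica--Sprague bound by manipulating $p(\lambda)=0$ directly under the assumption $|\lambda|>1$, but that is presented as an alternative, not as the proof of this theorem.)
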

\begin{theorem} \cite{NICASPRAGUE}
	Let $p(z)\coloneqq c_0+c_1z+\cdots +c_{n-1}z^{n-1}+z^n \in \mathbb{K}[z]$.  If $\lambda$ is a  zero of $p$, then 
	\begin{align*}
		|\lambda|\leq |c_o|
	\end{align*}
	or 
	\begin{align*}
		|\lambda|\leq \max\{1, |c_j|\}, \quad \text{ for some } 1\leq j \leq n-2
	\end{align*}
or
	\begin{align*}
		|\lambda +c_{n-1}|\leq 1.
	\end{align*}
In particular, 
\begin{align}\label{NSB}
	\text{(Nica-Sprague bound)} \quad \quad \quad	|\lambda|\leq \max\{1, |c_0|, |c_1|, \dots, |c_{n-1}|\}.
\end{align}
\end{theorem}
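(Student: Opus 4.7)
My plan is to apply the column form of the Nica--Sprague disk theorem (Theorem \ref{NS}) to the Frobenius companion matrix $C_p$, using the fact that the zeros of $p$ are precisely the eigenvalues of $C_p$. The asymmetry in the stated conclusion --- $c_0$ appears alone, $c_{n-1}$ appears inside $|\lambda + c_{n-1}|$, and the middle coefficients appear inside $\max\{1, |c_j|\}$ --- is the tell-tale sign that one must work with column data here, in contrast to the previous theorem in the excerpt (which uses row data and produces $|\lambda|\leq 1$ or $|\lambda+c_{n-1}|\leq \max\{|c_0|,\dots,|c_{n-2}|\}$).

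The next step is to read off the diagonal entry $(C_p)_{k,k}$ and the column quantity $v_k(C_p)$ for each $1\leq k\leq n$. Column $1$ of $C_p$ has diagonal entry $0$ and a single nonzero off-diagonal entry $-c_0$ in the bottom row, so $v_1(C_p)=|c_0|$. For $2\leq k\leq n-1$, column $k$ has diagonal entry $0$, a single $1$ in row $k-1$, and $-c_{k-1}$ in the bottom row, so $v_k(C_p)=\max\{1,|c_{k-1}|\}$. Column $n$ has diagonal entry $-c_{n-1}$, a single $1$ in row $n-1$, and zeros elsewhere, so $v_n(C_p)=1$.

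Feeding these three computations into Theorem \ref{NS} yields exactly the three alternatives in the conclusion, where in the middle case we set $j=k-1$ so that $j$ runs over $1,\dots,n-2$. For the particular bound, the ultrametric inequality applied to the third alternative gives
\begin{align*}
|\lambda| = |(\lambda+c_{n-1})-c_{n-1}|\leq \max\{1,|c_{n-1}|\},
\end{align*}
while the other two alternatives are plainly majorized by $\max\{1,|c_0|,\dots,|c_{n-1}|\}$, so the Nica--Sprague bound (\ref{NSB}) holds in all cases. There is no genuine obstacle; the only step requiring any judgment is recognizing that \emph{columns} of $C_p$, not rows, must be used to reproduce the stated asymmetric form, after which the argument is pure bookkeeping on the entries of $C_p$.
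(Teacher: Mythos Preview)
Your proof is correct and follows exactly the approach the paper indicates: the paper states that this result is obtained by applying Theorem~\ref{NS} to the companion matrix $C_p$, and your computation of the column quantities $v_k(C_p)$ and the subsequent bookkeeping carry this out precisely (the paper itself does not spell out the details). Your derivation of the bound~(\ref{NSB}) from the three alternatives via the ultrametric inequality is also the natural route; the paper additionally supplies a separate direct proof of~(\ref{NSB}) that avoids the companion matrix, but that is offered as an alternative argument rather than the intended one here.
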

\begin{theorem} \cite{NICASPRAGUE}
	Let $p(z)\coloneqq c_0+c_1z+\cdots +c_{n-1}z^{n-1}+z^n \in \mathbb{K}[z]$ with $c_0\neq 0$.  If $\lambda$ is a  zero of $p$, then 
	\begin{align*}
	\frac{1}{|\lambda|}\leq  1
	\end{align*}
	or 
	\begin{align*}
	\left|\frac{1}{\lambda}+\frac{c_1}{c_0}\right|	\leq \max\left\{\frac{1}{|c_0|}, \frac{|c_2|}{|c_0|}, \dots,  \frac{|c_{n-1}|}{|c_0|}\right\}.
	\end{align*}
	In particular, 
	\begin{align*}
		\text{(Nica-Sprague lower bound)} \quad \quad \quad	|\lambda|\geq  \frac{|c_0|}{\max\{1, |c_0|, |c_1|, \dots, |c_{n-1}|\}}.
	\end{align*}
\end{theorem}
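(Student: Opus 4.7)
The plan is to reduce this statement to the first Nica--Sprague polynomial zero bound (the Lagrange-type result already stated in the excerpt) applied to the \emph{reciprocal polynomial}, exactly as was done in the Archimedean setting earlier in the paper. Since $c_0 \neq 0$, define
\[
q(z) \coloneqq \frac{1}{c_0}\, z^n\, p(1/z) = \frac{1}{c_0} + \frac{c_{n-1}}{c_0} z + \frac{c_{n-2}}{c_0} z^2 + \cdots + \frac{c_1}{c_0} z^{n-1} + z^n \in \mathbb{K}[z].
\]
This $q$ is monic, and $\lambda \in \mathbb{K}\setminus\{0\}$ is a zero of $p$ if and only if $1/\lambda$ is a zero of $q$; note that $c_0\neq 0$ forces every zero $\lambda$ of $p$ to be nonzero, so this correspondence loses nothing.

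Next I would apply the first Nica--Sprague polynomial bound (the disjunction ``$|\mu|\leq 1$, or $|\mu + \text{(last coefficient)}|\leq \max$ of the moduli of the remaining coefficients'') to $q$. For $q$, the coefficient of $z^{n-1}$ is $c_1/c_0$, and the remaining non-leading coefficients are $1/c_0, c_{n-1}/c_0, c_{n-2}/c_0, \ldots, c_2/c_0$. Setting $\mu = 1/\lambda$ and substituting gives exactly the stated dichotomy
\[
\tfrac{1}{|\lambda|} \leq 1 \quad\text{or}\quad \Bigl|\tfrac{1}{\lambda} + \tfrac{c_1}{c_0}\Bigr| \leq \max\Bigl\{\tfrac{1}{|c_0|}, \tfrac{|c_2|}{|c_0|}, \ldots, \tfrac{|c_{n-1}|}{|c_0|}\Bigr\}.
\]

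For the ``in particular'' lower bound, I treat the two alternatives separately. In the first case, $|\lambda|\geq 1 \geq |c_0|/\max\{1,|c_0|,|c_1|,\ldots,|c_{n-1}|\}$ trivially. In the second case, the ultrametric (strong) triangle inequality gives
\[
\tfrac{1}{|\lambda|} = \Bigl|\tfrac{1}{\lambda} + \tfrac{c_1}{c_0} - \tfrac{c_1}{c_0}\Bigr| \leq \max\Bigl\{\Bigl|\tfrac{1}{\lambda} + \tfrac{c_1}{c_0}\Bigr|, \tfrac{|c_1|}{|c_0|}\Bigr\} \leq \tfrac{1}{|c_0|}\max\{1,|c_1|,|c_2|,\ldots,|c_{n-1}|\},
\]
so $|\lambda|\geq |c_0|/\max\{1,|c_1|,\ldots,|c_{n-1}|\} \geq |c_0|/\max\{1,|c_0|,|c_1|,\ldots,|c_{n-1}|\}$. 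Taking the common lower bound of the two cases yields the claim.

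I do not anticipate a real obstacle: once the reciprocal-polynomial trick is in place, the argument is mechanical. The only mild point requiring care is the bookkeeping in passing from $p$ to $q$ so that it is $c_1/c_0$ (and not $c_{n-1}/c_0$) that appears in the ``last coefficient'' position, and so that the maximum on the right omits exactly the term $|c_1|/|c_0|$; both are routine but worth double-checking against the reciprocal construction used earlier in the paper to derive the Lagrange lower bound from the Lagrange upper bound.
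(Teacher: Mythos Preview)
Your proposal is correct and follows exactly the route the paper indicates: form the reciprocal polynomial $q(z)=\frac{1}{c_0}z^n p(1/z)$ (as set up just before the Archimedean lower-bound theorems) and apply the row version of the Nica--Sprague bound to $q$, then use the ultrametric inequality to absorb the $|c_1|/|c_0|$ term and obtain the stated lower bound. The coefficient bookkeeping you flagged is handled correctly.
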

\begin{theorem} \cite{NICASPRAGUE}
	Let $p(z)\coloneqq c_0+c_1z+\cdots +c_{n-1}z^{n-1}+z^n \in \mathbb{K}[z]$ with $c_0\neq 0$.  If $\lambda$ is a  zero of $p$, then 
	\begin{align*}
		\frac{1}{|\lambda|}\leq \frac{1}{|c_0|}
	\end{align*}
	or 
	\begin{align*}
			\frac{1}{|\lambda|}\leq \max\left\{1, \frac{|c_j|}{|c_0|}\right\}, \quad \text{ for some } 2\leq j \leq n-1
	\end{align*}
	or
	\begin{align*}
		\left|\frac{1}{\lambda}+\frac{c_1}{c_0}\right|	\leq 1.
	\end{align*}
	In particular, 
	\begin{align*}
		\text{(Nica-Sprague lower bound)} \quad \quad \quad	|\lambda|\geq \frac{|c_0|}{\max\{1, |c_0|, |c_1|, \dots, |c_{n-1}|\}}.
	\end{align*}
\end{theorem}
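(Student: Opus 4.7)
The plan is to transplant the preceding Bell-type non-Archimedean theorem---the one asserting that a zero $\lambda$ of a monic polynomial satisfies $|\lambda|\leq|c_0|$, or $|\lambda|\leq\max\{1,|c_j|\}$ for some $1\leq j\leq n-2$, or $|\lambda+c_{n-1}|\leq 1$---to the reciprocal polynomial of $p$, in complete analogy with the classical lower-bound argument already sketched in the excerpt.

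The first step is to form the reciprocal polynomial
\[
q(z):=\frac{1}{c_0}\,z^n p(1/z)=z^n+\frac{c_1}{c_0}z^{n-1}+\cdots+\frac{c_{n-1}}{c_0}z+\frac{1}{c_0}\in\mathbb{K}[z],
\]
which makes sense because $c_0\neq 0$. Writing $q(z)=d_0+d_1 z+\cdots+d_{n-1}z^{n-1}+z^n$ in standard form identifies $d_0=1/c_0$, $d_{n-1}=c_1/c_0$, and $d_j=c_{n-j}/c_0$ for $1\leq j\leq n-1$. Since $\lambda$ is a nonzero root of $p$, the element $\mu:=1/\lambda$ is a root of $q$.

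I then apply the preceding Bell-type non-Archimedean theorem to $q$ at $\mu$. This yields exactly one of the three disjunctive alternatives $|\mu|\leq|d_0|$, or $|\mu|\leq\max\{1,|d_j|\}$ for some $1\leq j\leq n-2$, or $|\mu+d_{n-1}|\leq 1$. Substituting the coefficients above and re-indexing via $k:=n-j$ (so $k$ ranges over $2\leq k\leq n-1$ as $j$ ranges over $1\leq j\leq n-2$) produces verbatim the three inequalities in the statement of the theorem.

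Finally, for the in-particular lower bound I take the maximum of the three resulting upper estimates on $1/|\lambda|$. The third disjunct requires a brief ultrametric manoeuvre: $|1/\lambda|=|(1/\lambda+c_1/c_0)-c_1/c_0|\leq\max\{1,|c_1|/|c_0|\}$. Combining all three cases gives $1/|\lambda|\leq\max\{1,|c_0|,|c_1|,\ldots,|c_{n-1}|\}/|c_0|$, and inverting yields the stated Nica-Sprague lower bound. The entire proof is a direct transplant of the previous theorem under the reciprocal substitution; the only place genuinely requiring care is the index translation between the coefficients of $p$ and those of $q$, which I would verify explicitly.
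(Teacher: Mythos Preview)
Your proposal is correct and follows exactly the approach implicit in the paper: form the reciprocal polynomial $q(z)=\frac{1}{c_0}z^n p(1/z)$, apply the preceding non-Archimedean Bell-type theorem to $q$ at $\mu=1/\lambda$, re-index via $k=n-j$, and then bound the third disjunct via the ultrametric inequality to obtain the uniform lower bound. The only (harmless) slip is the phrase ``exactly one of the three disjunctive alternatives''; the cited theorem only guarantees that \emph{at least one} of them holds, which is all you use anyway.
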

Before passing, we give a different and direct proof of Inequality (\ref{NSB}). 	Let $p(z)\coloneqq c_0+c_1z+\cdots +c_{n-1}z^{n-1}+z^n \in \mathbb{K}[z]$ and  $\lambda$ be a  zero of $p$. If $|\lambda|\leq 1$, then clearly we have Inequality (\ref{NSB}). So we assume that $|\lambda|> 1$. Since $p(\lambda)=0$, we have 
\begin{align*}
	\left(\frac{c_0}{\lambda^n}+\frac{c_1}{\lambda^{n-1}}+\cdots +\frac{c_{n-1}}{\lambda}+1\right)\lambda^n=0.
\end{align*}
Since $\lambda \neq 0$, 
\begin{align*}
	\frac{c_0}{\lambda^n}+\frac{c_1}{\lambda^{n-1}}+\cdots +\frac{c_{n-1}}{\lambda}+1=0.
\end{align*}
Rearranging, 
\begin{align*}
	-1=\frac{c_0}{\lambda^n}+\frac{c_1}{\lambda^{n-1}}+\cdots +\frac{c_{n-1}}{\lambda}.
\end{align*}
By taking absolute value and noticing $|\lambda|>1$, we get 
\begin{align*}
	1&\leq \left|\frac{c_0}{\lambda^n}+\frac{c_1}{\lambda^{n-1}}+\cdots +\frac{c_{n-1}}{\lambda}\right|\leq \max\left\{\frac{|c_0|}{|\lambda|^n}, \frac{|c_1|}{|\lambda|^{n-1}}, \dots, \frac{|c_{n-1}|}{|\lambda|}\right\}\\
	&\leq \max\left\{\frac{|c_0|}{|\lambda|}, \frac{|c_1|}{|\lambda|}, \dots, \frac{|c_{n-1}|}{|\lambda|}\right\}=\frac{1}{|\lambda|}\max\{|c_0|, |c_1|, \dots, |c_{n-1}|\}.
\end{align*}
Rearranging above inequality completes the argument. \\
In 2025, Li and Li derived the following non-Archimedean analogue of Theorem  \ref{ONT}.
\begin{theorem} \cite{LILI} \label{LLT}
	(\textbf{Non-Archimedean Ostrowski Nonsingularity Theorem} or \textbf{Li-Li Nonsingularity Theorem})
	If  $A=[a_{j,k}]_{1\leq j\leq n, 1\leq k \leq n}\in  \mathbb{M}_n(\mathbb{K})$ satisfies
	\begin{align*}
		|a_{j,j}||a_{k,k}|>h_j(A)h_k(A), \quad \forall 1\leq j, k \leq n, j \neq k
	\end{align*}
	or 
	\begin{align*}
		|a_{j,j}||a_{k,k}|>v_j(A)v_k(A), \quad \forall 1\leq j, k \leq n, j \neq k,
	\end{align*}
	then $A$ is invertible. 	
\end{theorem}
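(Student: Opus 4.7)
The plan is to prove the row-hypothesis case by contradiction and then obtain the column-hypothesis case by applying the row version to the transpose $A^T$, using the identities $h_j(A^T) = v_j(A)$ and the fact that $A$ is invertible if and only if $A^T$ is. So essentially everything happens in the row case.

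Assume $A$ satisfies $|a_{j,j}||a_{k,k}| > h_j(A)h_k(A)$ for every pair $j \neq k$, and suppose for contradiction that there is a nonzero $x = (x_1,\ldots,x_n)^T \in \mathbb{K}^n$ with $Ax = 0$. I would then choose indices with $p \neq q$, $|x_p| = \max_j |x_j|$, and $|x_q| = \max_{k \neq p}|x_k|$. Two preliminary sanity checks are needed. First, the hypothesis forces every diagonal entry to be nonzero, since $|a_{j,j}|=0$ would require $0 > h_j(A)h_k(A) \geq 0$ for any $k \neq j$. Second, the case $|x_q|=0$ cannot arise: it would mean $x_k = 0$ for every $k \neq p$, and then the $p$-th scalar equation of $Ax=0$ reads $a_{p,p}x_p = 0$, forcing $a_{p,p}=0$, a contradiction. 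Hence $|x_p| \geq |x_q| > 0$.

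The core step is to extract from the $p$-th and $q$-th scalar equations of $Ax=0$ the pair of estimates
\begin{align*}
|a_{p,p}||x_p| \leq h_p(A)|x_q|, \qquad |a_{q,q}||x_q| \leq h_q(A)|x_p|,
\end{align*}
obtained by isolating the diagonal term, applying the non-Archimedean inequality $\bigl|\sum_{k \neq j} a_{j,k}x_k\bigr| \leq \max_{k \neq j}|a_{j,k}||x_k|$, and bounding the $|x_k|$'s by $|x_q|$ in the first line (since the maximum on the left excludes the index $p$) and by $|x_p|$ in the second. Multiplying these inequalities and cancelling the strictly positive factor $|x_p||x_q|$ yields $|a_{p,p}||a_{q,q}| \leq h_p(A)h_q(A)$, contradicting the hypothesis applied to the pair $(p,q)$.

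I do not foresee a genuine obstacle. The one subtlety worth emphasizing is that the hypothesis is tailored so that both estimates can be pushed through using the two largest entries in $|x|$ simultaneously, and that the non-Archimedean maximum (rather than an Archimedean sum) is precisely what replaces $r_j(A)$ by $h_j(A)$ in the bound, making the argument structurally cleaner than its classical counterpart.
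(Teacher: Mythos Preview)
Your argument is correct and is essentially the same as the paper's: the paper does not prove Theorem~\ref{LLT} directly but instead proves the eigenvalue-inclusion Theorem~\ref{NAB} by exactly your two-largest-coordinate computation and then deduces Theorem~\ref{LLT} via the equivalence in Theorem~\ref{E1}. Your proof is simply that same computation specialized to $\lambda=0$, bypassing the detour through the oval formulation; the handling of the degenerate case $|x_q|=0$ and the key multiplication step are identical.
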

In this article, we derive non-Archimedean version of Theorem \ref{BET}. We also show that our result is equivalent to Theorem \ref{LLT}. We give applications for bounding the zeros of polynomials over non-Archimedean fields.

\section{Non-Archimedean Brauer Oval (of Cassini)  Theorem}

We start with non-Archimedean Brauer eigenvalue inclusion theorem. Our proof is motivated from the proof of Brauer \cite{BRAUER}.
\begin{theorem}  \label{NAB} (\textbf{Non-Archimedean Brauer Oval (of Cassini) Theorem})

For every $A=[a_{j,k}]_{1\leq j\leq n, 1\leq k \leq n}\in  \mathbb{M}_n(\mathbb{K})$,
\begin{align*}
	\sigma(A)\subseteq \bigcup_{j,k=1, j \neq k}^n\{z \in \mathbb{K}: |z-a_{j, j}||z-a_{k,k}|\leq h_j(A)h_k(A)\}
\end{align*}
and 
\begin{align*}
	\sigma(A)\subseteq \bigcup_{j,k=1, j \neq k}^n\{z \in \mathbb{K}: |z-a_{j, j}||z-a_{k,k}|\leq v_j(A)v_k(A)\}.
\end{align*}	
\end{theorem}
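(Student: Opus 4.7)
The plan is to adapt the classical Brauer eigenvector argument, replacing the triangle inequality with the ultrametric inequality at each step. First I would assume $n \geq 2$ (otherwise the stated union is empty and the claim is vacuous in the intended sense; alternatively, for $n=1$ the eigenvalue is $a_{1,1}$ and no pair $j\neq k$ exists). Then, given $\lambda \in \sigma(A)$, pick a nonzero eigenvector $x=(x_1,\dots,x_n)^{T}$ with $Ax = \lambda x$ and choose indices so that $|x_j| = \max_{1\le i \le n}|x_i|$ and $|x_k| = \max_{1 \le i \le n,\, i \neq j}|x_i|$.

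Next, I would write out the $j$-th eigenvalue equation $(\lambda - a_{j,j})x_j = \sum_{l \neq j} a_{j,l} x_l$ and apply the ultrametric estimate to obtain
\begin{align*}
	|\lambda - a_{j,j}|\,|x_j| \;\leq\; \max_{l \neq j} |a_{j,l}|\,|x_l| \;\leq\; h_j(A)\,|x_k|,
\end{align*}
where the second inequality uses $|x_l| \le |x_k|$ for $l \neq j$ by the choice of $k$. The $k$-th equation analogously yields
\begin{align*}
	|\lambda - a_{k,k}|\,|x_k| \;\leq\; h_k(A)\,|x_j|,
\end{align*}
using $|x_l| \le |x_j|$ for every $l$. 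Multiplying these two inequalities gives
\begin{align*}
	|\lambda - a_{j,j}|\,|\lambda - a_{k,k}|\,|x_j|\,|x_k| \;\leq\; h_j(A)\,h_k(A)\,|x_j|\,|x_k|.
\end{align*}

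The main obstacle (really the only subtlety) is the degenerate case $|x_k| = 0$, where cancellation is not immediate. In that situation $x$ has only one nonzero entry $x_j$, and the $j$-th equation forces $(\lambda - a_{j,j})x_j = 0$, hence $\lambda = a_{j,j}$; then for any $k' \neq j$ the product $|\lambda - a_{j,j}|\,|\lambda - a_{k',k'}|$ vanishes and the desired inclusion holds trivially. Otherwise $|x_j||x_k| \neq 0$ and cancellation yields the row version of the theorem. Finally, the column version follows by applying the row version to $A^{T}$, since $\sigma(A^{T}) = \sigma(A)$ and $h_j(A^{T}) = v_j(A)$.
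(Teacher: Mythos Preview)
Your proof is correct and follows essentially the same route as the paper: pick an eigenvector, select indices $j$ and $k$ maximizing $|x_j|$ overall and $|x_k|$ over the remaining coordinates, use the ultrametric inequality on the $j$-th and $k$-th eigenvalue equations, multiply and cancel, and treat the degenerate case $|x_k|=0$ separately to conclude $\lambda=a_{j,j}$. The column version via $A^{T}$ is also identical to the paper's argument.
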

\begin{proof}
Let $\lambda \in \sigma(A)$. Then there exists a $0\neq \textbf{x} =(x_j)_{j=1}^n \in \mathbb{K}^n$ such that 
\begin{align}\label{PE1}
	\lambda \textbf{x} =A\textbf{x}.
\end{align}
Choose $1\leq j \leq n$ such that 
\begin{align*}
	 |x_j|=\max_{1\leq l \leq n}|x_l|.
\end{align*}
Now choose $1\leq k \leq n$ with $k \neq j$ such that 
\begin{align*}
	|x_k|=\max_{1\leq l \leq n, l\neq j}|x_l|.
\end{align*}
Then we have $1\leq j, k \leq n$ with $j \neq k$ and 
\begin{align}\label{PE2}
	|x_j|\geq |x_k|\geq \max_{1\leq l \leq n, l\neq j, l \neq k}|x_l|.
\end{align}
We have two cases. Case (i): $ |x_k|=0$. Considering the $j$-th coordinate in Equation (\ref{PE1}) gives 
\begin{align*}
	\lambda x_j=\sum_{p=1}^na_{j,p}x_p.
\end{align*}
Rewriting previous equation gives 
\begin{align*}
	(\lambda -a_{j,j})x_j=\sum_{p=1, p\neq j}^na_{j,p}x_p.
\end{align*}
Therefore using (\ref{PE2}) we get
\begin{align*}
	|(\lambda -a_{j,j})x_j|&=\left|\sum_{p=1, p\neq j}^na_{j,p}x_p\right|\leq \max_{1\leq p \leq n, p \neq j}|a_{j,p}x_p|\\
	&\leq \left(\max_{1\leq p \leq n, p \neq j}|a_{j,p}|\right)\left(\max_{1\leq p \leq n, p \neq j}|x_p|\right)=\left(\max_{1\leq p \leq n, p \neq j}|a_{j,p}|\right)|x_k|\\
	&=0.
\end{align*}
Since $	|x_j|\geq |x_k|$ for all $1\leq k \leq n$,  $ |x_k|=0$ and $\textbf{x}\neq 0$, we must have $|x_j|\neq 0$. Previous inequality then gives $	|\lambda -a_{j,j}|=0$. So 
\begin{align*}
\lambda =a_{j,j} \in \bigcup_{p, q=1, p \neq q}^n\{z \in \mathbb{K}: |z-a_{p,p}||z-a_{q,q}|\leq h_p(A)h_q(A)\}.
\end{align*}
Case (ii): $ |x_k|>0$. Considering $j$-th and $k$-th coordinates in Equation (\ref{PE1}) give
\begin{align}\label{PE3}
	(\lambda -a_{j,j})x_j=\sum_{p=1, p\neq j}^na_{j,p}x_p
\end{align}
and 
\begin{align}\label{PE4}
	(\lambda -a_{k,k})x_k=\sum_{q=1, q\neq k}^na_{k,q}x_q.
\end{align}
Multiplying Equations (\ref{PE3}) and (\ref{PE4}) and taking non-Archimedean valuation gives
\begin{align*}
|(\lambda -a_{j,j})x_j	(\lambda -a_{k,k})x_k|&=\left|\sum_{p=1, p\neq j}^na_{j,p}x_p\right|\left|\sum_{q=1, q\neq k}^na_{k,q}x_q\right|\\
&\leq \left(\max_{1\leq p \leq n, p \neq j}|a_{j,p}x_p|\right)\left(\max_{1\leq q \leq n, q \neq k}|a_{k,p}x_q|\right)\\
&\leq \left(\max_{1\leq p \leq n, p \neq j}|a_{j,p}|\right)\left(\max_{1\leq p \leq n, p \neq j}|x_p|\right)\left(\max_{1\leq q \leq n, q \neq k}|a_{k,p}|\right)\left(\max_{1\leq q \leq n, q \neq k}|x_q|\right)\\
&=\left(\max_{1\leq p \leq n, p \neq j}|a_{j,p}|\right)|x_k|\left(\max_{1\leq q \leq n, q \neq k}|a_{k,p}|\right)|x_j|\\
&=h_j(A)|x_k|h_k(A)|x_j|.
\end{align*}
Therefore 
\begin{align*}
|(\lambda -a_{j,j})(\lambda -a_{k,k})||x_jx_k|\leq h_j(A)h_k(A)|x_j||x_k|.
\end{align*}
Since $|x_jx_k|\neq0$, we have 
\begin{align*}
|(\lambda -a_{j,j})(\lambda -a_{k,k})|\leq h_j(A)h_k(A).	
\end{align*}
Previous inequality says that 
\begin{align*}
	\lambda  \in \bigcup_{p, q=1, p \neq q}^n\{z \in \mathbb{K}: |z-a_{p,p}||z-a_{q,q}|\leq h_p(A)h_q(A)\}.
\end{align*}
Second inclusion in the statement follows by considering the transpose of $A$ and noting that the spectrum of a matrix and its transpose are equal.	
\end{proof}
By applying Theorem \ref{NAB}, we get following results.
\begin{theorem} 
	Let $p(z)\coloneqq c_0+c_1z+\cdots +c_{n-1}z^{n-1}+z^n \in \mathbb{K}[z]$.  If $\lambda$ is a  zero of $p$, then 
	\begin{align*}
		|\lambda|\leq 1
	\end{align*}
	or 
	\begin{align*}
		|\lambda||\lambda+c_{n-1}|\leq \max\{|c_0|, \cdots ,|c_{n-2}|\}.
	\end{align*}
\end{theorem}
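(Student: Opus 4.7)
The plan is to apply the non-Archimedean Brauer oval theorem (Theorem \ref{NAB}) directly to the Frobenius companion matrix $C_p$ of $p$, using the standard fact (recalled earlier in the introduction) that the eigenvalues of $C_p$ coincide with the zeros of $p$. So the problem reduces to unpacking what the ovals of Cassini look like for this very sparse matrix.

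First I would compute the relevant data for $A = C_p$. The diagonal entries are $a_{j,j} = 0$ for $1 \leq j \leq n-1$ and $a_{n,n} = -c_{n-1}$. Row $j$ for $j \leq n-1$ has a single nonzero off-diagonal entry (equal to $1$ in position $j,j{+}1$), so $h_j(C_p) = 1$. Row $n$ has off-diagonal entries $-c_0, -c_1, \dots, -c_{n-2}$, so $h_n(C_p) = \max\{|c_0|, \ldots, |c_{n-2}|\}$. By Theorem \ref{NAB}, any zero $\lambda$ of $p$ satisfies
\begin{align*}
|\lambda - a_{j,j}|\,|\lambda - a_{k,k}| \leq h_j(C_p)\,h_k(C_p)
\end{align*}
for some pair $j \neq k$.

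Next I would split into two cases based on whether the index $n$ appears in $\{j,k\}$. If $n \notin \{j,k\}$, then both diagonal entries vanish and both $h$-values equal $1$, giving $|\lambda|^2 \leq 1$, hence $|\lambda| \leq 1$. Otherwise, without loss of generality $k = n$, so $a_{j,j} = 0$, $a_{n,n} = -c_{n-1}$, $h_j(C_p) = 1$, and $h_n(C_p) = \max\{|c_0|,\ldots,|c_{n-2}|\}$, yielding
\begin{align*}
|\lambda|\,|\lambda + c_{n-1}| \leq \max\{|c_0|, \ldots, |c_{n-2}|\}.
\end{align*}
Together these cover the two alternatives claimed in the theorem.

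There is essentially no obstacle here; the work is purely bookkeeping on the sparsity pattern of $C_p$, and the non-Archimedean Brauer theorem does all of the heavy lifting. The only minor point to watch is that one cannot choose the index pair $(j,k)$ in advance — one only knows such a pair exists — so the argument must be written as a case split on the produced pair rather than as a choice.
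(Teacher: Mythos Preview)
Your proposal is correct and is exactly the argument the paper has in mind: the paper states this result immediately after Theorem~\ref{NAB} with the preface ``By applying Theorem~\ref{NAB}, we get following results,'' and your computation of the diagonal entries and row-maxima $h_j(C_p)$ together with the case split on whether $n\in\{j,k\}$ is precisely the routine bookkeeping that unpacks that application.
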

\begin{theorem} 
	Let $p(z)\coloneqq c_0+c_1z+\cdots +c_{n-1}z^{n-1}+z^n \in \mathbb{K}[z]$.  If $\lambda$ is a  zero of $p$, then 
	\begin{align*}
		|\lambda|^2\leq |c_0|\max\{1, |c_j|\},  \quad \text{ for some } 1\leq j \leq n-2
	\end{align*}
	or 
	\begin{align*}
		|\lambda||\lambda +c_{n-1}|\leq |c_0|
	\end{align*}
	or 
	\begin{align*}
		|\lambda|^2\leq \left(\max\{1, |c_j|\}\right)\left(\max\{1, |c_k|\}\right), \quad \text{ for some } 1\leq j, k \leq n-2, j \neq k
	\end{align*}
	or 
	\begin{align*}
		|\lambda||\lambda +c_{n-1}|\leq \max\{1, |c_j|\},  \quad \text{ for some } 1\leq j \leq n-2.
	\end{align*}
\end{theorem}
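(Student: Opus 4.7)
The plan is to apply the column form of Theorem \ref{NAB} (the non-Archimedean Brauer oval theorem) to the Frobenius companion matrix $C_p$ of $p$. Since the zeros of $p$ are exactly the eigenvalues of $C_p$, every zero $\lambda$ satisfies
\[
    |\lambda - (C_p)_{j,j}|\,|\lambda - (C_p)_{k,k}| \leq v_j(C_p)\, v_k(C_p)
\]
for some pair $j, k \in \{1, \ldots, n\}$ with $j \neq k$. The four alternatives in the statement will correspond to the four combinatorial possibilities for such a pair, grouped by whether either index is the distinguished index $1$ (whose column is special because of the $-c_0$ entry) or $n$ (whose diagonal entry is $-c_{n-1}$).

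First I would read off the relevant data directly from $C_p$. The diagonal entries are $(C_p)_{j,j} = 0$ for $1 \leq j \leq n-1$ and $(C_p)_{n,n} = -c_{n-1}$. The off-diagonal column maxima are
\[
    v_1(C_p) = |c_0|, \qquad v_k(C_p) = \max\{1,\,|c_{k-1}|\} \text{ for } 2 \leq k \leq n-1, \qquad v_n(C_p) = 1,
\]
since column $1$ contains only $-c_0$ off the diagonal, an intermediate column $k$ contains the superdiagonal $1$ together with the bottom-row entry $-c_{k-1}$, and column $n$ contains only the superdiagonal $1$ off the diagonal.

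Then I would substitute these values into the Brauer oval inequality and split on the four disjoint classes of pairs $(j,k)$: (i) $(1,k)$ with $2 \leq k \leq n-1$ yields $|\lambda|^2 \leq |c_0|\max\{1,|c_{k-1}|\}$; (ii) $(1,n)$ yields $|\lambda|\,|\lambda + c_{n-1}| \leq |c_0|$; (iii) $(j,k)$ with $2 \leq j, k \leq n-1$ and $j \neq k$ yields $|\lambda|^2 \leq \max\{1,|c_{j-1}|\}\max\{1,|c_{k-1}|\}$; (iv) $(j,n)$ with $2 \leq j \leq n-1$ yields $|\lambda|\,|\lambda + c_{n-1}| \leq \max\{1,|c_{j-1}|\}$. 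After the harmless reindexing $j \mapsto j-1$ (and likewise for $k$) these are exactly the four alternatives in the statement, with $1 \leq j,k \leq n-2$ and $j \neq k$ in Case (iii).

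The main obstacle is purely bookkeeping: keeping track of the index shift and correctly identifying the one diagonal entry $-c_{n-1}$ so that Cases (ii) and (iv) produce the factor $|\lambda + c_{n-1}|$ rather than $|\lambda|$. No additional analytic input is needed beyond Theorem \ref{NAB} and the structure of the companion matrix; in particular the non-Archimedean hypothesis enters only through that theorem.
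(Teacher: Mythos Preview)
Your proposal is correct and follows exactly the approach the paper intends: the paper states this theorem immediately after Theorem~\ref{NAB} with the single sentence ``By applying Theorem \ref{NAB}, we get following results,'' and your computation of the diagonal entries and the column quantities $v_k(C_p)$, together with the four-way case split on the pair $(j,k)$, is precisely the calculation that application amounts to.
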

\begin{theorem} 
	Let $p(z)\coloneqq c_0+c_1z+\cdots +c_{n-1}z^{n-1}+z^n \in \mathbb{K}[z]$ with $c_0\neq 0$.  If $\lambda$ is a  zero of $p$, then 
	\begin{align*}
	\frac{1}{|\lambda|}\leq 1
	\end{align*}
	or 
	\begin{align*}
		\frac{1}{|\lambda|}\left|\frac{1}{\lambda}+\frac{c_1}{c_0}\right|	\leq \max\left\{\frac{1}{|c_0|}, 	\frac{|c_2|}{|c_0|}, \cdots,	\frac{|c_{n-1}|}{|c_0|}\right\}.
	\end{align*}
\end{theorem}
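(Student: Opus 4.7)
The plan is to apply Theorem \ref{NAB} to the Frobenius companion matrix of the reciprocal polynomial $q$, exactly as in the earlier sequence of results. Specifically, I would set
\begin{align*}
q(z)\coloneqq \frac{1}{c_0}z^np\left(\frac{1}{z}\right)=\frac{1}{c_0}+\frac{c_{n-1}}{c_0}z+\cdots +\frac{c_{1}}{c_0}z^{n-1}+z^n,
\end{align*}
and recall the already-noted equivalence: for $\lambda\neq 0$, $p(\lambda)=0$ if and only if $q(1/\lambda)=0$. Thus the zeros of $q$ are exactly the reciprocals of the nonzero zeros of $p$, and these coincide with the eigenvalues of the companion matrix $C_q\in \mathbb{M}_n(\mathbb{K})$ already displayed earlier in the paper.

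Next I would compute the row-max quantities $h_j(C_q)$. For $1\leq j\leq n-1$, row $j$ of $C_q$ has diagonal entry $0$ and a single nonzero off-diagonal entry equal to $1$ (in column $j+1$), so $h_j(C_q)=1$. The last row has diagonal entry $-c_1/c_0$, and its off-diagonal entries are $-1/c_0,-c_{n-1}/c_0,\dots,-c_2/c_0$, giving
\begin{align*}
h_n(C_q)=\max\left\{\frac{1}{|c_0|},\frac{|c_2|}{|c_0|},\dots,\frac{|c_{n-1}|}{|c_0|}\right\}.
\end{align*}

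Now I would apply Theorem \ref{NAB} to $C_q$. Every eigenvalue $\mu$ of $C_q$ lies in a Cassini oval determined by some pair $j\neq k$. There are only two combinatorial possibilities for this pair. If both $j,k\leq n-1$, the oval condition is $|\mu|\cdot|\mu|\leq 1\cdot 1$, i.e.\ $|\mu|\leq 1$. Otherwise exactly one index, say $k$, equals $n$, and the condition becomes $|\mu|\,|\mu+c_1/c_0|\leq 1\cdot h_n(C_q)$. Taking the union, every eigenvalue $\mu$ of $C_q$ satisfies
\begin{align*}
|\mu|\leq 1 \quad\text{or}\quad |\mu|\left|\mu+\frac{c_1}{c_0}\right|\leq \max\left\{\frac{1}{|c_0|},\frac{|c_2|}{|c_0|},\dots,\frac{|c_{n-1}|}{|c_0|}\right\}.
\end{align*}
Finally, setting $\mu=1/\lambda$ for a zero $\lambda$ of $p$ (so $\lambda\neq 0$ since $c_0\neq 0$) yields exactly the two inequalities in the statement.

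The argument is essentially bookkeeping once Theorem \ref{NAB} is available; no step is genuinely hard. The only mild obstacle is making sure that the two combinatorial cases for the pair $(j,k)$ in the Brauer union are correctly read off from $C_q$, and in particular that the $\max$ in $h_n(C_q)$ omits the entry corresponding to the diagonal position (which is why $|c_1|/|c_0|$ does not appear inside the $\max$ on the right-hand side of the second alternative).
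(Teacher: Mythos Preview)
Your proposal is correct and follows exactly the approach the paper takes: apply Theorem~\ref{NAB} to the companion matrix $C_q$ of the reciprocal polynomial $q$ and read off the two cases for the pair $(j,k)$. The paper does not spell out these details (it simply asserts that the result follows by applying Theorem~\ref{NAB}), so your write-up is in fact more explicit than the paper's own treatment.
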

\begin{theorem} 
	Let $p(z)\coloneqq c_0+c_1z+\cdots +c_{n-1}z^{n-1}+z^n \in \mathbb{K}[z]$ with $c_0\neq 0$.  If $\lambda$ is a  zero of $p$, then 
	\begin{align*}
		\frac{1}{|\lambda|^2}\leq 	\frac{1}{|c_0|}\max\left\{1, 	\frac{|c_j|}{|c_0|}\right\},  \quad \text{ for some } 2\leq j \leq n-1
	\end{align*}
	or 
	\begin{align*}
		\frac{1}{|\lambda|}	\left|\frac{1}{\lambda}+\frac{c_1}{c_0}\right|\leq \frac{1}{|c_0|}
	\end{align*}
	or 
	\begin{align*}
		\frac{1}{|\lambda|^2}\leq \left(\max\left\{1, \frac{|c_j|}{|c_0|}\right\}\right)\left(\max\left\{1, \frac{|c_k|}{|c_0|}\right\}\right), \quad \text{ for some } 2\leq j, k \leq n-1, j \neq k
	\end{align*}
	or 
	\begin{align*}
		\frac{1}{|\lambda|}	\left|\frac{1}{\lambda}+\frac{c_1}{c_0}\right|\leq \max\left\{1, \frac{|c_j|}{|c_0|}\right\},  \quad \text{ for some } 2\leq j \leq n-1.
	\end{align*}
\end{theorem}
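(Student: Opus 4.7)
The plan is to reduce the statement to the previous theorem (the Brauer-type bound for zeros of a monic polynomial) by passing to the reciprocal polynomial that was already introduced in the introduction, namely
\[
q(z)\coloneqq \frac{1}{c_0}z^{n}p\!\left(\tfrac{1}{z}\right)=\frac{1}{c_0}+\frac{c_{n-1}}{c_0}z+\frac{c_{n-2}}{c_0}z^{2}+\cdots+\frac{c_{1}}{c_0}z^{n-1}+z^{n}\in\mathbb{K}[z].
\]
The hypothesis $c_0\neq 0$ guarantees that $\lambda\in\mathbb{K}\setminus\{0\}$ is a zero of $p$ if and only if $1/\lambda$ is a zero of $q$. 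Moreover any zero $\lambda$ of $p$ is automatically nonzero, since $p(0)=c_0\neq 0$, so the reciprocation is harmless.

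Next I would apply the preceding theorem (the second Brauer-type bound in this section, whose proof in turn comes from Theorem~\ref{NAB} applied to the companion matrix) to the polynomial $q$ and its zero $\mu\coloneqq 1/\lambda$. Writing the coefficients of $q$ as $b_0=\tfrac{1}{c_0}$, $b_j=\tfrac{c_{n-j}}{c_0}$ for $1\leq j\leq n-1$, the four alternatives of that theorem for $\mu$ read:
\begin{align*}
|\mu|^{2}&\leq |b_0|\max\{1,|b_j|\}\quad\text{for some }1\leq j\leq n-2,\\
|\mu||\mu+b_{n-1}|&\leq |b_0|,\\
|\mu|^{2}&\leq \bigl(\max\{1,|b_j|\}\bigr)\bigl(\max\{1,|b_k|\}\bigr)\quad\text{for some }1\leq j,k\leq n-2,\ j\neq k,\\
|\mu||\mu+b_{n-1}|&\leq \max\{1,|b_j|\}\quad\text{for some }1\leq j\leq n-2.
\end{align*}

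Then I would perform the purely notational substitution $\mu=1/\lambda$, $b_0=1/c_0$, $b_{n-1}=c_1/c_0$, and $b_j=c_{n-j}/c_0$. Reindexing via $i\coloneqq n-j$ turns the range $1\leq j\leq n-2$ into $2\leq i\leq n-1$, and the four alternatives become precisely the four alternatives in the statement. No step is genuinely delicate: the eigenvalue content is already packaged in Theorem~\ref{NAB} and in the preceding theorem, so the only thing to verify carefully is the bookkeeping of indices in the substitution $b_j=c_{n-j}/c_0$, which is the one place I expect small errors could creep in. Once that dictionary is recorded, the four cases match line by line and the proof is complete.
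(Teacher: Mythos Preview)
Your proposal is correct and matches the paper's intended argument: the paper derives this result simply by applying the non-Archimedean Brauer oval theorem (via the preceding four-alternative theorem for zeros of a monic polynomial) to the reciprocal polynomial $q$, exactly as you outline. Your index bookkeeping for $b_j=c_{n-j}/c_0$ and the reindexing $i=n-j$ is accurate, so the four alternatives translate line by line as claimed.
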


Like the complex case, Theorem \ref{NAB} cannot be extended by considering three rows/columns. An example given for the scalar case in \cite{VARGA} (also see \cite{NEWMANTHOMPSON, BRUALDI}) extends to non-Archimedean case.  Consider the  following matrix over any non-Archimedean field $\mathbb{K}$. 
\begin{align*}
A\coloneqq 
\begin{pmatrix}
1&1&0&0\\
1&1&0&0\\
0&0&1&0\\
0&0&0&1\\
\end{pmatrix}.
\end{align*}
Then $\sigma(A)=\{0, 1, 1, 2\}$ and $h_1(A)=1$, $h_2(A)=1$, $h_3(A)=0$, $h_4(A)=0$. Hence 
\begin{align*}
	\sigma (A) \nsubseteq \bigcup_{j,k,l=1, j \neq k, j \neq l, k \neq l}^4\{z \in \mathbb{K}: |z-a_{j, j}||z-a_{k,k}||z-a_{l,l}|\leq h_j(A)h_k(A)h_l(A)\}=\{0\}.
\end{align*}
Next we show that Theorem \ref{NAB} improves Theorem \ref{NS}. Our proof is motivated from the complex case, given in \cite{VARGA}.
\begin{theorem}
	For every $A=[a_{j,k}]_{1\leq j\leq n, 1\leq k \leq n}\in  \mathbb{M}_n(\mathbb{K})$, 
	\begin{align*}
	\bigcup_{j,k=1, j \neq k}^n\{z \in \mathbb{K}: |z-a_{j, j}||z-a_{k,k}|\leq h_j(A)h_k(A)\}\subseteq \bigcup_{j=1}^n\{z \in \mathbb{K}: |z-a_{j, j}|\leq h_j(A)\}
	\end{align*}
and 
	\begin{align*}
	\bigcup_{j,k=1, j \neq k}^n\{z \in \mathbb{K}: |z-a_{j, j}||z-a_{k,k}|\leq v_j(A)v_k(A)\}\subseteq \bigcup_{j=1}^n\{z \in \mathbb{K}: |z-a_{j, j}|\leq v_j(A)\}.
\end{align*}
\end{theorem}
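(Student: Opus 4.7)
The plan is a short contrapositive argument that does not actually use the non-Archimedean nature of $\mathbb{K}$: the inequalities in question live entirely in $[0,\infty)$ and are between real numbers obtained by applying $|\cdot|$. I would prove the first (row) inclusion directly and then deduce the second (column) inclusion by transposition.

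For the first inclusion, I would fix $z \in \mathbb{K}$ and assume $z$ lies in none of the right-hand Gershgorin disks, i.e.,
\begin{align*}
|z - a_{j,j}| > h_j(A), \quad \forall 1 \leq j \leq n.
\end{align*}
Fix an arbitrary pair $j \neq k$. Since $h_j(A), h_k(A) \geq 0$ and both $|z - a_{j,j}|$ and $|z - a_{k,k}|$ are strictly positive, I can multiply the two strict real inequalities to obtain
\begin{align*}
|z - a_{j,j}||z - a_{k,k}| > h_j(A) h_k(A).
\end{align*}
As this holds for every pair $j \neq k$, $z$ lies in no Brauer oval, which is precisely the contrapositive of the asserted inclusion.

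The only step that warrants a second thought — and essentially the only possible "obstacle" — is the boundary case in which one or both of $h_j(A), h_k(A)$ equals zero. There the right-hand side of the multiplied inequality is zero while the left-hand side is a product of two strictly positive numbers, so the strict inequality persists trivially. For the second inclusion, I would apply the first inclusion to $A^{T}$: the diagonal entries are unchanged under transposition and $h_j(A^{T}) = v_j(A)$, so the statement for $A^{T}$ is word-for-word the second inclusion for $A$.
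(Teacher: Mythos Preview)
Your argument is correct and is essentially the contrapositive of the paper's direct proof: where the paper picks $z$ in a Brauer oval and splits into the cases $h_j(A)h_k(A)=0$ and $h_j(A)h_k(A)>0$ to land $z$ in a Gershgorin disk, you assume $z$ lies outside every Gershgorin disk and multiply the two strict inequalities, which is the same step read backwards. Your transposition argument for the column version is fine (the paper simply says the second inclusion is ``similar'').
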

\begin{proof}
We prove the first inclusion, proof of second inclusion is similar. Set 
\begin{align*}
Z\coloneqq \bigcup_{j,k=1, j \neq k}^n\{z \in \mathbb{K}: |z-a_{j, j}||z-a_{k,k}|\leq h_j(A)h_k(A)\}
\end{align*}	
and let $z \in Z$. Then there exist $1\leq j, k \leq n$ with $j \neq k$ such that 
\begin{align*}
	|z-a_{j, j}||z-a_{k,k}|\leq h_j(A)h_k(A).
\end{align*}
We have to consider two cases. Case (i): $h_j(A)h_k(A)=0$. Then $z=a_{j, j}$ or $z=a_{k, k}$. Now clearly we have 
\begin{align*}
	z \in \{x \in \mathbb{K}: |x-a_{j,j}|\leq h_j(A)\} \cup \{y \in \mathbb{K}: |y-a_{k, k}|\leq h_k(A)\} \subseteq \bigcup_{l=1}^n\{z \in \mathbb{K}: |z-a_{l, l}|\leq v_l(A)\}.
\end{align*}
Case(ii): $h_j(A)h_k(A)>0$. Then 
\begin{align*}
	|z-a_{j, j}|\leq h_j(A)	
\end{align*}
or 
\begin{align*}
	|z-a_{k,k}|\leq h_k(A).
\end{align*}
Now clearly we have 
\begin{align*}
	z \in \{x \in \mathbb{K}: |x-a_{j,j}|\leq h_j(A)\} \cup \{y \in \mathbb{K}: |y-a_{k, k}|\leq h_k(A)\} \subseteq \bigcup_{l=1}^n\{z \in \mathbb{K}: |z-a_{l, l}|\leq v_l(A)\}.
\end{align*}
\end{proof}
Now we show that Theorems \ref{LLT} and \ref{NAB} are equivalent.
\begin{theorem}\label{E1}
	Let $n \in \mathbb{N}$.  Following two statements are equivalent. 
	\begin{enumerate}[\upshape(i)]
		\item Let $A=[a_{j,k}]_{1\leq j\leq n, 1\leq k \leq n}\in  \mathbb{M}_n(\mathbb{K})$. Then 
		\begin{align*}
			\sigma(A)\subseteq \bigcup_{j,k=1, j \neq k}^n\{z \in \mathbb{K}: |z-a_{j, j}||z-a_{k,k}|\leq h_j(A)h_k(A)\}.
		\end{align*}
		\item 	If  $B=[b_{j,k}]_{1\leq j\leq n, 1\leq k \leq n}\in  \mathbb{M}_n(\mathbb{K})$ satisfies
		\begin{align*}
			|b_{j,j}||b_{k,k}|>	h_j(B)h_k(B), \quad \forall 1\leq j, k \leq n, j \neq k,
		\end{align*}
		then $B$ is invertible. 
	\end{enumerate}
\end{theorem}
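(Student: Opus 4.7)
The plan is to prove the two implications in Theorem \ref{E1} by the standard contrapositive/substitution argument, mirroring the equivalence between Theorems \ref{NS} and \ref{NS2} proved in \cite{NICASPRAGUE} and the complex-case equivalence in \cite{VARGA}.

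For the direction (i) $\Rightarrow$ (ii), I would argue by contraposition. Suppose $B\in\mathbb{M}_n(\mathbb{K})$ is not invertible; then $0\in\sigma(B)$, so applying (i) to $B$ produces indices $1\leq j,k\leq n$ with $j\neq k$ such that $|0-b_{j,j}||0-b_{k,k}|\leq h_j(B)h_k(B)$, i.e.\ $|b_{j,j}||b_{k,k}|\leq h_j(B)h_k(B)$. This directly contradicts the strict inequality in (ii), so (ii) must hold.

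For the direction (ii) $\Rightarrow$ (i), let $A\in\mathbb{M}_n(\mathbb{K})$ and $\lambda\in\sigma(A)$. Set $B\coloneqq \lambda I_n - A=[b_{j,k}]$. Then $B$ is singular. The key observation is that the off-diagonal entries of $B$ are just $-a_{j,k}$, so since the non-Archimedean valuation satisfies $|-a_{j,k}|=|a_{j,k}|$, we get $h_j(B)=h_j(A)$ for every $j$. Meanwhile $b_{j,j}=\lambda-a_{j,j}$. By the contrapositive of (ii), the strict inequalities $|b_{j,j}||b_{k,k}|>h_j(B)h_k(B)$ cannot hold for \emph{all} pairs $j\neq k$; hence there exist $j\neq k$ with $|\lambda-a_{j,j}||\lambda-a_{k,k}|\leq h_j(A)h_k(A)$, placing $\lambda$ in the claimed union of Cassini ovals.

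Neither implication presents a serious obstacle: the argument is essentially formal, and the non-Archimedean nature of $|\cdot|$ is used only through the trivial identity $|-x|=|x|$ that allows the substitution $B=\lambda I-A$ to preserve the row quantities $h_j$. The only point to be careful about is verifying that $h_j(B)=h_j(A)$ rather than some larger quantity — but since the $j$-th row of $B$ off the diagonal coincides with the negative of the $j$-th row of $A$ off the diagonal, this is immediate. The analogous equivalence between Theorem \ref{NAB} (second inclusion, involving the column quantities $v_k$) and the column form of Theorem \ref{LLT} follows by applying the above to $A^T$, since $v_k(A)=h_k(A^T)$ and $\sigma(A)=\sigma(A^T)$.
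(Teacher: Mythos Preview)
Your proof is correct and follows exactly the same contrapositive/substitution argument as the paper: (i)$\Rightarrow$(ii) by assuming $B$ singular and applying (i) at $0\in\sigma(B)$, and (ii)$\Rightarrow$(i) by setting $B=\lambda I_n-A$ and noting $h_j(B)=h_j(A)$. The paper's proof is identical in structure and detail, so there is nothing to add.
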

\begin{proof}
	\begin{enumerate}[\upshape(i)]
		\item $ \implies $ (ii)	Let $B=[b_{j,k}]_{1\leq j\leq n, 1\leq k \leq n}\in  \mathbb{M}_n(\mathbb{K})$ satisfies
		\begin{align}\label{PI1}
			|b_{j,j}||b_{k,k}|>	h_j(B)h_k(B), \quad \forall 1\leq j, k \leq n, j \neq k.
		\end{align}
		We need to show that $B$ is invertible. Let us assume that $B$ is not invertible. Then $0\in \sigma(B)$. By assumption (i), there exist $1\leq j, k \leq n$ with $j \neq k$ such that 
		\begin{align}\label{PI2}
			|0-b_{j, j}||0-b_{k,k}|\leq h_j(B)h_k(B).
		\end{align}
		Inequalities (\ref{PI1}) and (\ref{PI2}) contradict each other. Hence $B$ is invertible. 
		\item $ \implies $ (i) Let $A=[a_{j,k}]_{1\leq j\leq n, 1\leq k \leq n}\in  \mathbb{M}_n(\mathbb{K})$ and $\lambda \in \sigma(A)$. We claim that 
		\begin{align*}
		|\lambda-a_{j, j}||\lambda-a_{k,k}|\leq h_j(A)h_k(A), \quad \text{ for some } 1\leq j, k \leq n, j \neq k.	
		\end{align*}
	Let us suppose that claim fails. Then 
	\begin{align}\label{CI}
		|\lambda-a_{j, j}||\lambda-a_{k,k}|> h_j(A)h_k(A), \quad \forall  1\leq j, k \leq n, j \neq k.
	\end{align}
Let $I_n$ be the identity matrix in $ \mathbb{M}_n(\mathbb{K})$. Define $B\coloneqq \lambda I_n-A=:[b_{j,k}]_{1\leq j\leq n, 1\leq k \leq n}$. Then $0   \in \sigma(B)$, hence $B$ is not invertible. Note that $h_j(A)=h_j(B)$ for all $1\leq j \leq n$. But we also have from (\ref{CI})
\begin{align*}
	|b_{j,j}||b_{k,k}|> h_j(B)h_k(B), \quad \forall  1\leq j, k \leq n, j \neq k.
\end{align*}
Assumption (ii) says that $B$ is invertible which is not possible. Hence claim holds. 
	\end{enumerate}
\end{proof}
By considering the transpose of a matrix, we easily get following result. 
\begin{theorem} \label{E2}
	Let $n \in \mathbb{N}$.  Following two statements are equivalent. 
	\begin{enumerate}[\upshape(i)]
		\item Let $A=[a_{j,k}]_{1\leq j\leq n, 1\leq k \leq n}\in  \mathbb{M}_n(\mathbb{K})$. Then 
		\begin{align*}
			\sigma(A)\subseteq \bigcup_{j,k=1, j \neq k}^n\{z \in \mathbb{K}: |z-a_{j, j}||z-a_{k,k}|\leq v_j(A)v_k(A)\}.
		\end{align*}
		\item 	If  $B=[b_{j,k}]_{1\leq j\leq n, 1\leq k \leq n}\in  \mathbb{M}_n(\mathbb{K})$ satisfies
		\begin{align*}
			|b_{j,j}||b_{k,k}|>	v_j(B)v_k(B), \quad \forall 1\leq j, k \leq n, j \neq k,
		\end{align*}
		then $B$ is invertible. 
	\end{enumerate}
\end{theorem}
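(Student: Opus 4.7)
The plan is to reduce Theorem \ref{E2} to the already established Theorem \ref{E1} via the transpose map, exactly as the sentence preceding the statement suggests. The map $A \mapsto A^T$ is a bijection on $\mathbb{M}_n(\mathbb{K})$, so any universally quantified statement over $\mathbb{M}_n(\mathbb{K})$ is equivalent to the same statement applied to transposes; I just need to translate the hypotheses and conclusions of (i) and (ii) under transposition.

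First I would record the three standard facts I need: (a) $\sigma(A)=\sigma(A^T)$, since $A$ and $A^T$ have the same characteristic polynomial; (b) the diagonal entries are preserved, $(A^T)_{j,j}=a_{j,j}$; and (c) the row-max and column-max quantities swap, $h_j(A^T)=v_j(A)$ and $v_j(A^T)=h_j(A)$, directly from the definitions. I would also note that $B$ is invertible iff $B^T$ is invertible.

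Next I would carry out the translation. For (i) of Theorem \ref{E2}, using (a), (b), (c), the inclusion
\begin{align*}
\sigma(A)\subseteq \bigcup_{j,k=1,\,j\neq k}^n\{z\in\mathbb{K}:|z-a_{j,j}||z-a_{k,k}|\leq v_j(A)v_k(A)\}
\end{align*}
for all $A\in\mathbb{M}_n(\mathbb{K})$ is equivalent, after replacing $A$ by $A^T$, to
\begin{align*}
\sigma(A)\subseteq \bigcup_{j,k=1,\,j\neq k}^n\{z\in\mathbb{K}:|z-a_{j,j}||z-a_{k,k}|\leq h_j(A)h_k(A)\}
\end{align*}
for all $A\in\mathbb{M}_n(\mathbb{K})$, which is statement (i) of Theorem \ref{E1}. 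Similarly, the nonsingularity hypothesis $|b_{j,j}||b_{k,k}|>v_j(B)v_k(B)$ for $B$ transfers under $B\mapsto B^T$ to $|b_{j,j}||b_{k,k}|>h_j(B)h_k(B)$, and the conclusion that $B$ is invertible is preserved. Hence (ii) of Theorem \ref{E2} is equivalent to (ii) of Theorem \ref{E1}.

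Finally, invoking Theorem \ref{E1} closes the argument: (i) of Theorem \ref{E2} $\iff$ (i) of Theorem \ref{E1} $\iff$ (ii) of Theorem \ref{E1} $\iff$ (ii) of Theorem \ref{E2}. There is no real obstacle here; the only thing to be careful about is that the equivalence in Theorem \ref{E1} is between universal statements over all matrices of fixed size $n$, and therefore does survive the substitution $A\leftrightarrow A^T$ unchanged. One could alternatively give a direct proof by repeating the argument of Theorem \ref{E1} with $v_j$ in place of $h_j$ (the only place row-quantities entered was through the specific form of the companion hypothesis), but the transpose reduction is cleaner and avoids duplication.
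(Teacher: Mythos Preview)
Your proposal is correct and follows precisely the approach indicated in the paper, which simply states that the result follows by considering the transpose of a matrix. You have fleshed out this one-line remark carefully, recording the relevant facts about the transpose (preservation of spectrum, diagonal entries, invertibility, and the swap $h_j(A^T)=v_j(A)$) and explaining why the universal quantification over all $A\in\mathbb{M}_n(\mathbb{K})$ is preserved under $A\mapsto A^T$; nothing further is needed.
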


\section{Acknowledgments}
This paper has been partly developed at the Lodha Mathematical Sciences Institute (LMSI), Mumbai, India,  where the author attended the ``Educational Workshop on High Dimensional Expanders'' from 23-25 December 2025. The author thanks the LMSI and its creators for the opportunity given to him. Author  thanks Chaitanya G. K. for making him aware of  reference \cite{LILI}. 

 \bibliographystyle{plain}
 \bibliography{reference.bib}

\end{document}